\documentclass[12pt]{article}

\usepackage[T1]{fontenc}
\usepackage{lmodern}

\usepackage{amsmath,amssymb,amsthm,extarrows}
\usepackage{mathrsfs}
\allowdisplaybreaks[4]
\numberwithin{equation}{section}

\usepackage{graphicx}
\usepackage{xcolor}
\usepackage{float}

\usepackage{tikz}
\usetikzlibrary{
	arrows.meta,
	bending,
	calc,
	backgrounds,
	fit,
	positioning,
	decorations.pathmorphing,
	decorations.pathreplacing
}

\tikzset{
	redline/.style={
		red,
		thick,
		->,
		shorten >=1pt,
		shorten <=1pt
	}
}

\usepackage{array}
\usepackage{booktabs}
\usepackage{multirow}
\usepackage{makecell}
\usepackage{adjustbox}

\usepackage{enumitem}
\usepackage{indentfirst}

\usepackage{hyperref}
\hypersetup{colorlinks=true}

\theoremstyle{plain}
\newtheorem{theorem}{Theorem}[section]

\newtheorem{proposition}[theorem]{Proposition}
\newtheorem{conjecture}[theorem]{Conjecture}
\newtheorem{lemma}[theorem]{Lemma}

\definecolor{handred}{RGB}{220,0,0}
\definecolor{handgreen}{RGB}{0,150,0}

\renewcommand{\S}{\mathfrak{S}}
\newcommand{\TT}{\mathcal{T}}
\newcommand{\A}{\mathcal{A}}
\newcommand{\B}{\mathcal{B}}

\newcommand{\comp}{\mathsf{comp}}
\newcommand{\asc}{\mathsf{asc}}
\newcommand{\des}{\mathsf{des}}
\newcommand{\ldr}{\mathsf{ldr}}
\newcommand{\lmax}{\mathsf{lmax}}
\newcommand{\rmax}{\mathsf{rmax}}
\newcommand{\slmax}{\mathsf{slmax}}

\newcommand{\leaves}{\mathsf{leaves}}
\newcommand{\sub}{\mathsf{sub}}
\newcommand{\lsub}{\mathsf{lsub}}
\newcommand{\rpath}{\mathsf{rpath}}
\newcommand{\rootlab}{\mathsf{root}}
\newcommand{\lcap}{\mathsf{lcap}}

\newcommand{\std}{\mathsf{std}}
\newcommand{\emptyword}{\epsilon}

\begin{document}
	\begin{center}
		{\Large\bfseries
			On a conjecture concerning two-stack sortable permutations}
	\end{center}
	
	\begin{center}
		{\small  Yongzhou Wen,
			Sherry H.F. Yan$^{*}$}\\[4pt]
		
		Department of Mathematics, Zhejiang Normal University\\
		Jinhua 321004, P.R. China
	\end{center}
	
	\begingroup
	\renewcommand{\thefootnote}{\fnsymbol{footnote}}
	\footnotetext[1]{Corresponding author.\\
		\emph{E-mail addresses:}
		xxgz771@zjnu.edu.cn (Y. Wen),
		hfy@zjnu.cn (S.H.F. Yan).}
	\endgroup
	
	\vskip0.1in
	\noindent\textbf{Abstract.}
 The two-stack sortable permutations of length $n$ are known to be equinumerous with permutations of length $n$ avoiding the dashed patterns $3\text{-}1\text{-}4\text{-}2$ and $2\text{-}41\text{-}3$. In this paper, we confirm an intriguing conjecture posed by Claesson, Kitaev, and Steingr\'imsson, which asserts that the quadruple $(\comp,\asc,\ldr,\rmax)$ is equidistributed on these two permutation classes. Here $\comp$ and $\asc$ count the number of components and ascents, respectively, $\ldr$ denotes the length of the initial decreasing run, and $\rmax$ denotes the number of right-to-left maxima. This is accomplished  by constructing bijections between two-stack sortable permutations and $\beta(1,0)$-trees.
	\vskip0.1in
	\noindent\textbf{Keywords:}
	two-stack sortable permutation,  
	$\beta(1,0)$-tree, permutation statistic.

	\section{Introduction}
	
	The stack-sorting problem was introduced by Knuth~\cite{Knuth}
	and further studied by West~\cite{West}.
	Let $\S_n$ denote the set of permutations of
	$[n]:=\{1,2,\ldots,n\}$.
	The stack-sorting map $S$ can be defined recursively on words
	with distinct letters as follows~\cite{BousquetMelou}.
	Set $S(\emptyword)=\emptyword$, where $\emptyword$ denotes
	the empty word. Given a nonempty word $\pi$, write
	$\pi=\alpha m\beta$, where $m$ is the maximum letter of $\pi$
	and $\alpha,\beta$ are possibly empty words. Then
	\[
	S(\pi)=S(\alpha)S(\beta)m.
	\]
	A permutation $\pi\in\S_n$ is said to be
	\emph{two-stack sortable} if $S(S(\pi))=12\cdots n$.
	Denote by $\TT_n$ the set of all two-stack sortable permutations
	in $\S_n$.
	West~\cite{West} conjectured that
	\begin{equation}\label{eq:2ssp-enumeration}
		|\TT_n|=\frac{2(3n)!}{(n+1)!(2n+1)!},
		\qquad n\geq 1.
	\end{equation}
	This conjecture was first proved by Zeilberger~\cite{Zeilberger}
	using generating functions.
	Goulden and West~\cite{GouldenWest} later gave a bijective proof
	by establishing a correspondence between two-stack sortable
	permutations of length $n$ and rooted nonseparable planar maps
	with $n+1$ edges.

	Claesson, Kitaev, and Steingr\'imsson~\cite{CKS} studied
	permutations avoiding the two dashed patterns
	$3\text{-}1\text{-}4\text{-}2$ and $2\text{-}41\text{-}3$.
	For a word $w$ with distinct letters, let $\std(w)$ denote
	the permutation obtained from $w$ by replacing its $i$-th
	smallest letter by $i$.
	An occurrence of the pattern $3\text{-}1\text{-}4\text{-}2$
	in a permutation $\pi=\pi_1\pi_2\cdots\pi_n$ is a subsequence
	$\pi_i\pi_j\pi_k\pi_\ell$, where $i<j<k<\ell$, such that
	$\std(\pi_i\pi_j\pi_k\pi_\ell)=3142$.
	An occurrence of $2\text{-}41\text{-}3$ is a subsequence
	$\pi_i\pi_j\pi_{j+1}\pi_k$, where $i<j<j+1<k$, such that
	$\std(\pi_i\pi_j\pi_{j+1}\pi_k)=2413$.
	A permutation is said to avoid a pattern if it has no
	occurrences of it.
	Following~\cite{CKS}, we call a permutation avoiding both
	$3\text{-}1\text{-}4\text{-}2$ and $2\text{-}41\text{-}3$
	an \emph{avoider}. These permutations are the reverses of the nonseparable
	permutations studied by Dulucq, Gire, and West~\cite{DulucqGireWest}.
	Let $\A_n$ denote the set of all avoiders in $\S_n$. For example, $25314\in\A_5$, but $25134\notin\A_5$.

	Jacquard and Schaeffer~\cite{JacquardSchaeffer} established
	a bijection between rooted nonseparable planar maps and certain
	labeled plane trees, called $\beta(1,0)$-trees.
	Recall that a rooted plane tree is a rooted tree in which
	the children of each node are linearly ordered.
	A \emph{$\beta(1,0)$-tree} with at least one edge is
	a rooted plane tree whose nodes are labeled with positive
	integers satisfying  
	\begin{itemize}
	\item each leaf has label $1$, and the root has label equal
	to the sum of its children's labels;
	\item every other internal node has label no greater than
	the sum of its children's labels.
	\end{itemize}
	Denote by $\B_n$ the set of all $\beta(1,0)$-trees
	with $n$ edges.
	For $n=0$, set $\B_0=\{\circ\}$, where $\circ$
	denotes the tree consisting of a single  
  node labeled by $1$.
	For example, Fig.~\ref{fig:beta-tree-example} shows a tree
	in $\B_9$.
	
	\begin{figure}[htbp]
		\centering
		\begin{tikzpicture}[
			scale=0.85,
			transform shape,
			x=1cm,y=1cm,
			vertex/.style={
				circle,draw,fill=white,
				inner sep=0pt,minimum size=5pt
			},
			every label/.style={font=\normalsize},
			line width=0.6pt
			]
			\node[vertex,label=above:$4$] (r) at (0,0) {};
			\node[vertex,label=left:$1$] (a) at (-1.5,-1.1) {};
			\node[vertex,label=below:$1$] (b) at (0,-1.1) {};
			\node[vertex,label=right:$2$] (c) at (1.5,-1.1) {};
			
			\node[vertex,label=left:$2$] (d) at (-1.5,-2.1) {};
			\node[vertex,label=below:$1$] (a1) at (-2,-3.1) {};
			\node[vertex,label=below:$1$] (a2) at (-1,-3.1) {};
			
			\node[vertex,label=below:$1$] (c1) at (0.7,-2.1) {};
			\node[vertex,label=below:$1$] (c2) at (1.5,-2.1) {};
			\node[vertex,label=below:$1$] (c3) at (2.3,-2.1) {};
			
			\draw (r)--(a) (r)--(b) (r)--(c)
			(a)--(d) (d)--(a1) (d)--(a2)
			(c)--(c1) (c)--(c2) (c)--(c3);
		\end{tikzpicture}
		\caption{A $\beta(1,0)$-tree with nine edges.}
		\label{fig:beta-tree-example}
	\end{figure}
	
	Following ~\cite{CKS},  for a $\beta(1,0)$-tree $B$, let $\sub(B)$ denote the number
	of children of the root, and let $\leaves(B)$ denote the
	number of leaves of $B$.
	The leftmost and rightmost paths of $B$ are the paths from
	the root to the leftmost and rightmost leaves, respectively.
	Let $\lsub(B)$ denote the number of nodes labeled $1$
	on the leftmost path, excluding the root, and let
	$\rpath(B)$ denote the number of edges on the rightmost
	path. 
	For $\circ$, we have
	\[
	\leaves(\circ)=1,
	\qquad
	\sub(\circ)=\lsub(\circ)=\rpath(\circ)=0.
	\]
	As an example, for   the tree $B$ in Fig.~\ref{fig:beta-tree-example}, we have
	\[
	(\sub,\leaves,\lsub,\rpath)B=(3,6,2,2).
	\]
	
	Given a permutation $\pi=\pi_1\pi_2\cdots\pi_n\in\S_n$,
	define the following permutation statistics:
	\[
	\begin{aligned}
		\asc(\pi)
		&=\bigl|\{i\mid \pi_i<\pi_{i+1},\ 1\leq i<n\}\bigr|,\\
		\ldr(\pi)
		&=\max\{i\mid \pi_1>\pi_2>\cdots>\pi_i,\ 1\leq i\leq n\},\\
		\rmax(\pi)
		&=\bigl|\{i\mid \pi_i>\pi_j
		\text{ for all }i<j\leq n,\ 1\leq i\leq n\}\bigr|.
	\end{aligned}
	\]
	For two words $\alpha$ and $\beta$, write $\alpha\prec\beta$
	if every letter of $\alpha$ is smaller than every letter
	of $\beta$.
	A \emph{component} of $\pi$ is a nonempty consecutive subword
	$\tau$ such that $\pi=\sigma\tau\rho$ with
	$\sigma\prec\tau\prec\rho$, and such that if
	$\tau=\alpha\beta$ with $\alpha\prec\beta$, then
	either $\alpha$ or $\beta$ is empty.
	Let $\comp(\pi)$ denote the number of components of $\pi$.
	For instance, the components of $213645$ are $21$, $3$,
	and $645$, and $\comp(213645)=3$. 
	
	In \cite{CKS}, Claesson, Kitaev, and Steingr\'imsson established a bijection between  $\B_n$ and $\A_n$  which verifies the following properties.

	\begin{proposition}[{\cite[Theorem~9]{CKS}}]
		\label{prop:CKS}
		For $n\geq1$, there exists a bijection
		$\phi:\B_n\longrightarrow\A_n$ such that for any
		$B\in\B_n$, we have
		\begin{equation}\label{eq:CKS-statistics}
			(\sub,\leaves,\lsub,\rpath)B
			=
			(\comp,1+\asc,\ldr,\rmax)\phi(B).
		\end{equation}
	\end{proposition}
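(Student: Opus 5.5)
This proposition is quoted from \cite[Theorem~9]{CKS}. To reprove it, I would build $\phi$ recursively. The plan is to show that $\beta(1,0)$-trees and avoiders admit parallel recursive decompositions, and that each statistic transforms in the same way under both.

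\textbf{Components and the root.} First I decompose a tree $B\in\B_n$ at the root. If the root has children $v_1,\ldots,v_k$, then $B$ splits into the $k$ planted subtrees hanging from the root. Each planted subtree, consisting of an edge to $v_i$ together with the subtree of $v_i$, is itself a $\beta(1,0)$-tree with one root child. This holds because the root label is forced to equal the sum of its children's labels. On the permutation side, an avoider factors uniquely as a direct sum $\tau_1\oplus\cdots\oplus\tau_k$ of its components. I would check that $\pi$ is an avoider if and only if each $\tau_i$ is. The dashed patterns $3\text{-}1\text{-}4\text{-}2$ and $2\text{-}41\text{-}3$ are both skew-indecomposable in the relevant sense, so an occurrence cannot straddle a direct-sum boundary.

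Defining $\phi(B)=\phi(B_1)\oplus\cdots\oplus\phi(B_k)$ then gives $\sub=\comp$ immediately. The remaining statistics behave additively in the right way:
\begin{itemize}
\item leaves add, and $\asc$ adds plus $k-1$;
\item $\rpath$ lives in the last summand, as does $\rmax$;
\item $\lsub$ lives in the first summand, as does $\ldr$.
\end{itemize}
This reduces everything to trees whose root has a single child, matched with indecomposable avoiders.

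\textbf{The indecomposable case.} For a tree whose root has one child $v$ with label $\ell$, I remove the root edge. This leaves a tree rooted at $v$ whose label may be smaller than the sum $s$ of its children's labels. Relabel $v$ as $s$ to obtain a genuine $\beta(1,0)$-tree $B'$, and record the slack $s-\ell\in\{0,\ldots,s-1\}$.

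On the avoider side, I need a matching operation: remove or insert the entry $n$ (or the first letter), with $s$ admissible insertion sites, where $s$ is an "active site" parameter. This is the standard generating-tree approach of Dulucq, Gire and West for nonseparable permutations, transferred here via reversal. I would guess the parameter is $\comp$ of the smaller avoider, equivalently the root label, and insertion at the $j$-th site merges the last $j$ components into one.

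Next I track each statistic through this insertion and confirm it matches the change on the tree side:
\begin{itemize}
\item Leaves are unchanged, so inserting $n$ must preserve the number of ascents within the merged block.
\item $\rpath$ increases by one, matching a new right-to-left maximum $n$ placed so that the old right-to-left maxima survive.
\item $\lsub$ counts label-$1$ nodes on the left path, so $v$ contributes exactly when $\ell=1$, i.e.\ for maximal slack. This must correspond to the insertion that extends the initial decreasing run by one.
\end{itemize}

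\textbf{Main obstacle.} The hardest step is choosing the insertion rule so that $\ldr$ and $\rmax$ are controlled simultaneously. In particular, the extremal site must add to $\ldr$ exactly when $\ell=1$, and in no other case. I would first try inserting $n$ immediately before the $j$-th-from-last left-to-right-maximum-type site, and verify avoidance of both patterns by a case check. If that rule fails, I would instead use the reverse/complement symmetry and the known Goulden--West or Jacquard--Schaeffer maps to guide the choice of insertion rule.

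Bijectivity then follows by induction on $n$, since the decomposition on each side is unique and the numbers of choices agree at every step.
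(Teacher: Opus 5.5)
The paper gives no proof of this proposition; it quotes it from \cite[Theorem~9]{CKS}, so your argument has to stand on its own. Your reduction to the indecomposable case is sound. The patterns $3142$ and $2413$ are simple, hence sum-indecomposable, and that property (not skew-indecomposability) is what stops an occurrence from straddling a $\oplus$-boundary. So avoidance passes to and from components, and $\comp,\asc,\ldr,\rmax$ behave under $\oplus$ exactly as $\sub,\leaves,\lsub,\rpath$ do.

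There is a genuine gap in the indecomposable case, which is the whole content of the theorem: you give no construction there, and your tentative rule fails. You say insertion at the $j$-th site merges the last $j$ components of $\sigma$. Then the result has $\comp(\sigma)-j+1$ components, so it is decomposable for every $j<\comp(\sigma)$. Concretely, inserting $3$ into $12$ before its left-to-right maxima gives $312$ and $132=1\oplus 21$. The second cannot be the image of an indecomposable tree, since you need $\comp=\sub=1$. Nor is $\comp$ the right number of choices: $\comp$ is the image of $\sub$, not of $\rootlab$. Already $|\overline{\B}_4|=\sum_{U\in\B_3}\rootlab(U)=1+1+2+2+2+3=11$, whereas $\sum_{\sigma\in\A_3}\comp(\sigma)=3+2+2+1+1+1=10$.

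The missing idea is an extra invariant. An indecomposable tree is $\lambda_\ell(U)$ with $1\le\ell\le\rootlab(U)$ and $U$ an arbitrary smaller tree. So your induction must carry a fifth statistic $s$ on avoiders with $s(\phi(U))=\rootlab(U)$ for every $U$ with at least one edge. In particular $s$ must be additive under $\oplus$. For instance, the number of right-to-left minima is additive and has the same distribution on $\A_3$ as $\rootlab$ on $\B_3$; that is only a candidate, not a proof.

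You would then need an explicit operation $\sigma\mapsto\sigma_\ell$, $1\le\ell\le s(\sigma)$, and would have to prove that it:
\begin{itemize}
\item preserves avoidance of both dashed patterns;
\item always yields an indecomposable avoider;
\item is uniquely invertible on indecomposable avoiders;
\item leaves $\asc$ unchanged and raises $\rmax$ by one;
\item raises $\ldr$ by one exactly when $\ell=1$;
\item satisfies $s(\sigma_\ell)=\ell$.
\end{itemize}
Your closing sentence (``the numbers of choices agree at every step'') assumes exactly this unproved decomposition. The paper meets the same issue on the $\TT$ side: its map $\psi$ works only because it also tracks the auxiliary pair $\lcap\leftrightarrow\slmax$ alongside the four target statistics.
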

	
	Dulucq, Gire, and Guibert~\cite{DulucqGireGuibert} proved that the pair $(\asc,\lmax)$ on avoiders is equidistributed  with the pair  $(\des,\rmax)$ on two-stack sortable permutations. This motivated Claesson, Kitaev, and Steingr\'imsson \cite{CKS} to pose the following intriguing conjecture.
	\begin{conjecture}[{\cite[Conjecture~13]{CKS}}]
		\label{conj:CKS}
		For $n\geq 1$, the quadruple
		$(\comp,\asc,\ldr,\rmax)$ has the same distribution
		on $\A_n$ and $\TT_n$.
	\end{conjecture}
	
	In this paper, we provide a bijective proof of
	Conjecture~\ref{conj:CKS} by constructing a    bijection between
	two-stack sortable permutations and $\beta(1,0)$-trees.
	
	\begin{theorem}\label{thm:main}
		For $n\geq 1$, there exists a bijection
		$F:\TT_n\longrightarrow\B_n$ such that for any $\pi\in\TT_n$,
		we have
		\begin{equation}\label{eq:main-statistics}
			(\comp,1+\asc,\ldr,\rmax)\pi
			=
			(\sub,\leaves,\lsub,\rpath)F(\pi).
		\end{equation}
	\end{theorem}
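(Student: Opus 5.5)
We construct $F$ recursively, matching a recursive decomposition of two-stack sortable permutations with the standard recursive decomposition of $\beta(1,0)$-trees. Combined with Proposition~\ref{prop:CKS}, the composite $\phi\circ F$ then gives Conjecture~\ref{conj:CKS}.

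\textbf{Step 1: reduce to indecomposable pieces.} If $\pi=\tau_1\tau_2\cdots\tau_k$ is the component decomposition with $\tau_1\prec\cdots\prec\tau_k$, then $S(S(\pi))$ acts componentwise. Hence $\pi\in\TT_n$ if and only if each $\std(\tau_i)$ is two-stack sortable. On the tree side, a $\beta(1,0)$-tree with $k$ root children splits into $k$ planted trees $T_1,\dots,T_k$, where $T_i$ is an edge from the root to a subtree whose top label is at most its children's sum. The statistics are additive or local under these splittings:
\begin{itemize}
\item $\asc(\pi)=\sum\asc(\tau_i)+(k-1)$ and $\leaves=\sum\leaves(T_i)$, so $1+\asc$ is additive in the same way as $\leaves$;
\item $\ldr(\pi)=\ldr(\tau_1)$, to match $\lsub$ computed in $T_1$;
\item $\rmax(\pi)=\rmax(\tau_k)$, to match $\rpath$ computed in $T_k$.
\end{itemize}
So it suffices to build a bijection $f$ from indecomposable two-stack sortable permutations of length $m$ to planted $\beta(1,0)$-trees with $m$ edges. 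Under $f$, $\comp=1$, $1+\asc$ must correspond to the leaf count, $\ldr$ to the number of $1$-labels on the leftmost path, and $\rmax$ to the length of the rightmost path. We then set $F(\pi)$ to be the tree obtained by gluing $f(\tau_1),\dots,f(\tau_k)$ at a common root, whose label is the sum of its children's labels.

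\textbf{Step 2: a generating tree for the indecomposable case.} For planted trees, removing the top edge leaves a tree whose root label $\ell$ may be any value from $1$ up to its children's sum $s$. We will mimic this with a Zeilberger/West-style recursion on two-stack sortable permutations: insert the entry $n$ into an active site of a smaller permutation, and do the same for the tree side. Concretely, we use the catalytic-variable decomposition of $\TT_n$. In Zeilberger's proof, the catalytic statistic is the number of entries to the left of $n$ that are smaller than the entry following it, or equivalently the number of admissible insertion positions.

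The plan is to identify the label of the top non-root node with this catalytic parameter. The count of choices $\ell\in\{1,\dots,s\}$ should then match the number of admissible positions for a new maximum. Unlabelled structure, namely concatenating subtrees under a new node, corresponds to concatenating/inflating permutations. For example, $\pi=\sigma\,n\,\rho$ with $S(\pi)=S(\sigma)S(\rho)n$, and the two-stack sortability condition is a $2341$/barred $3\bar5241$ avoidance condition. We will verify, statistic by statistic, that:
\begin{itemize}
\item inserting a leaf adds one ascent;
\item the leftmost-path $1$-labels track initial descents;
\item the rightmost path tracks right-to-left maxima, since appending along the right spine creates a new right-to-left maximum.
\end{itemize}

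\textbf{Main obstacle.} The hard part is making the insertion rule simultaneously preserve two-stack sortability and keep $\ldr$ and $\rmax$ controlled; $\comp$ and $\asc$ are comparatively easy. I would first try the decomposition $\pi=\sigma\,n\,\rho$, reading $\rho$ as the rightmost child subtree, which feeds $\rmax$ recursively, and $\sigma$ as the left part feeding $\ldr$. The label constraint would be encoded via the number of entries of $\sigma$ exceeding $\min\rho$. If this fails to keep $\ldr$ aligned with $\lsub$, the fallback is to prove the refined equidistribution through the functional equations. That means deriving the generating function with catalytic variable for $\TT$ refined by $(\asc,\ldr,\rmax)$ and checking that it matches the known one for $\B$ refined by $(\leaves,\lsub,\rpath)$, then extracting a bijection by the standard generating-tree isomorphism.
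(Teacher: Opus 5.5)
\textbf{Verdict: genuine gap.} Your Step 1 is correct and matches the paper's last step, where $F(\pi)=\theta(\pi^{(1)})\oplus\cdots\oplus\theta(\pi^{(r)})$ is checked with the same additivity of $\comp,\asc,\ldr,\rmax$ against $\sub,\leaves,\lsub,\rpath$. Everything then rests on a bijection $\overline{\TT}_m\to\overline{\B}_m$ carrying $(\rmax,1+\asc,\ldr)$ to $(\rpath,\leaves,\lsub)$. Step 2 never constructs one; it lists strategies, and the concrete ones you name fail.

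\emph{Inserting $n$.} This gives no generating tree, because $\TT$ is not closed under deleting $n$: $35241\in\TT_5$ but $3241\notin\TT_4$.

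\emph{Counting entries of $\sigma$ above $\min\rho$.} In $\pi=\sigma n\rho\in\TT$ this number is always $0$ or $1$. Two such entries $a,b$, together with $n$ and $\min\rho$, would form a $2341$ or a $3241$ with no room for a larger entry between $a$ and $b$. What actually varies is how high the maximum of $\sigma$ sits among the values of $\rho$. In Fang's decomposition $\pi=C(\alpha,\beta,i)$ this is the index $0\le i\le\slmax(\beta)$, tied to the left-to-right maxima of $S(\beta)$.

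\emph{Matching the catalytic parameters.} Identifying the tree's parameter (the root label, via $\lambda_\ell(U)$ with $1\le\ell\le\rootlab(U)$) with the permutation's also fails. An indecomposable $\pi$ is either $C(\epsilon,\beta,0)$, or $C(\alpha,\beta,i)$ with $\alpha$ indecomposable and $\beta$ an \emph{arbitrary} nonempty element of $\TT$. The rule $\slmax(C(\alpha,\beta,i))=\slmax(\alpha)+\slmax(\beta)-i+1$ is a different recursion from $\rootlab(U\oplus V)=\rootlab(U)+\rootlab(V)$, $\rootlab(\lambda_\ell(U))=\ell$. The two statistics are not even equidistributed: the root labels on $\B_2$ are $1,2$, while $\slmax(12)=\slmax(21)=2$.

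The missing idea is a tree statistic that obeys the $\slmax$ recursion. The paper introduces the left-path capacity $\lcap$. It first builds an auxiliary bijection $\psi:\TT_n\to\B_n$ on \emph{all} trees with $(\lcap,\rpath,\leaves,\lsub)\psi(\pi)=(\slmax,\rmax,1+\asc,\ldr)\pi$:
\begin{itemize}
\item Along the leftmost path of $H=\lambda_1(\psi(\beta))$, the blocks $b_s$ sum to $\lcap(\psi(\beta))=\slmax(\beta)$.
\item Writing $i$ in terms of these blocks selects a node $r_{p-1}$ and an offset $q$.
\item The root subtrees of $\psi(\alpha)$ are attached at $r_{p-1}$, and labels are raised along the path.
\end{itemize}
Only then can $\theta$ on indecomposables be defined recursively from $\theta(\alpha)$ and $\psi(\beta)$. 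The map $\psi$ is needed precisely because $\beta$ ranges over all of $\TT$.

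Your fallback (refined functional equations plus a generating-tree isomorphism) meets the same obstruction. The equations for $\TT$ and $\B$ have different catalytic variables ($\slmax$ versus root label), so they cannot be checked to match term by term. You would have to either solve both with the extra variables for $\asc,\ldr,\rmax$, or find exactly such a statistic as $\lcap$. Since the theorem is existential, a completed equidistribution proof would suffice. As written, though, the proposal establishes only the reduction to indecomposables.
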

	By Proposition~\ref{prop:CKS} and Theorem~\ref{thm:main},
	the composition $\phi\circ F$ induces  a bijection between 
	$\TT_n$  and $\A_n$ that preserves the quadruple $(\comp,\asc,\ldr,\rmax)$, thereby confirming
 Conjecture~\ref{conj:CKS}.

	\section{The construction of the bijection $F$}
 In this section, we shall establish the bijection $F$ as stated in Theorem~\ref{thm:main}. To this end, we first recall the recursive decomposition of two-stack sortable permutations introduced by Fang~\cite{Fang} and the recursive decomposition of $\beta(1,0)$-trees introduced by Claesson, Kitaev, and Steingr\'imsson~\cite{CKS}. By employing the recursive decomposition of $\beta(1,0)$-trees, we construct a bijection $\psi\colon \TT_n\rightarrow \B_n$. Relying on the bijection $\psi$, we establish a bijection $\theta$ between indecomposable two-stack sortable permutations and indecomposable $\beta(1,0)$-trees which sends the triple $(1+\asc, \ldr, \rmax)$ on indecomposable two-stack sortable permutations to the triple $(\leaves, \lsub, \rpath)$ on the corresponding indecomposable $\beta(1,0)$-trees. Finally, by employing the direct-sum decomposition of two-stack sortable permutations, the recursive decomposition of $\beta(1,0)$-trees, and the bijection $\theta$, we establish the bijection $F\colon \TT_n\rightarrow \B_n$.
	
	\subsection{ A   decomposition of two-stack sortable permutations}
	\label{subsec:2ssp-decomposition}
	
	We first describe the recursive decomposition of
	two-stack sortable permutations introduced by
	Fang~\cite{Fang}.
	Let $\TT=\bigcup_{n\geq0}\TT_n$, with the convention that
	$\TT_0=\{\epsilon\}$, where $\epsilon$ denotes the empty
	permutation.
	We set $\asc(\epsilon)=\ldr(\epsilon)=\rmax(\epsilon)=0$.

	Given a permutation $\sigma=\sigma_1\sigma_2\ldots \sigma_n\in \mathfrak{S}_n$,  the entry $\sigma_i$ is said to be a {\em left-to-right maximum} if $\sigma_j<\sigma_i$ for all $j< i$. 
	For a permutation $\sigma$, let $\sigma^{+k}$ denote the sequence obtained by adding $k$ to every entry of $\sigma$. Similarly, let $\sigma^{+(k_1,m,k_2)}$ denote the sequence obtained from $\sigma$ by adding $k_1$ to each entry   smaller than $m$ and adding $k_2$ to all remaining entries.
	For example, if $\sigma=2417536$, then
	\[
	\sigma^{+2}=4\,6\,3\,9\,7\,5\,8,
	\qquad
	\sigma^{+(1,4,3)}=3\,7\,2\,(10) \,8\,4\,9.
	\]
	
	Let $\slmax(\sigma)$ denote the number of left-to-right maxima of $S(\sigma)$, with the convention that $\slmax(\emptyword)=0$.
	For example, if $\sigma=2417536$, then
	$S(\sigma)=2143567$, whose left-to-right maxima are
	$2,4,5,6,7$.
	Hence we have  $\slmax(\sigma)=5$.
	
	Given $\pi\in\TT_n$ with $n\geq1$, write
	$\pi=L\,n\,R$ and define
	\[
	\Delta(\pi)
	=
	\bigl(\std(L),\std(R)\bigr),
	\]
	where $\std(\emptyword)=\emptyword$.
	By~\cite[Proposition~3.2]{Fang}, both permutations
	in $\Delta(\pi)$ belong to $\TT$.
	
	Conversely, let $\alpha\in\TT_k$ and $\beta\in\TT_{\ell}$.
	Define
	\begin{equation}\label{eq:C1}
		C_1(\alpha,\beta)
		=
		\alpha\cdot(k+\ell+1)\cdot\beta^{+k},
	\end{equation}
	where $\alpha$ and $\beta$ may be empty.
	If both permutations are nonempty, let
	$a_1<a_2<\cdots<a_t$ be the left-to-right maxima of
	$S(\beta)$, where $t=\slmax(\beta)$.
	For $1\leq i\leq t$, define
	\begin{equation}\label{eq:C2}
		C_2(\alpha,\beta,i)
		=
		\alpha^{+(0,k,a_i)}
		\cdot(k+\ell+1)
		\cdot\beta^{+(k-1,a_i+1,k)}.
	\end{equation}
	For example, let $\alpha=231$ and $\beta=2417536$.
	Since $S(\beta)=2143567$, its left-to-right maxima are
	$a_1=2$, $a_2=4$, $a_3=5$, $a_4=6$ and $a_5=7$.
	Taking $i=2$, we have $a_i=4$ and
	\[
	\begin{aligned}
		C_1(231,2417536)
		&=231(11)574(10)869,\\
		C_2(231,2417536,2)
		&=271(11)463(10)859.
	\end{aligned}
	\]
	The following proposition follows
	from~\cite[Propositions~3.3--3.6]{Fang}.
	
	\begin{proposition}
		\label{prop:fang-decomposition}
		Given $\alpha\in\TT_k$ and $\beta\in\TT_{\ell}$
		with $k,\ell\geq0$, the permutation
		$C_1(\alpha,\beta)$ belongs to $\TT_{k+\ell+1}$ such that $\Delta(C_1(\alpha,\beta))=(\alpha,\beta)$,
		and 
		\begin{equation}\label{eq:C1-slmax}
			\slmax(C_1(\alpha,\beta))
			=\slmax(\alpha)+\slmax(\beta)+1.
		\end{equation}
		For $k,\ell\geq1$ and
		$1\leq i\leq\slmax(\beta)$, the permutation
		$C_2(\alpha,\beta,i)$ belongs to $\TT_{k+\ell+1}$,
		satisfying that  $\Delta(C_2(\alpha,\beta,i))=(\alpha,\beta)$,
		and 
		\begin{equation}\label{eq:C2-slmax}
			\slmax(C_2(\alpha,\beta,i))
			=\slmax(\alpha)+\slmax(\beta)-i+1.
		\end{equation}
		Moreover, every nonempty permutation in $\TT$ admits a unique decomposition of the form either $C_1(\alpha,\beta)$ or $C_2(\alpha,\beta,i)$.
	\end{proposition}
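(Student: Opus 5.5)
The proposition has three parts: membership in $\TT$, the formulas for $\slmax$, and unique decomposability. I would prove all three by computing $S$ explicitly on $C_1(\alpha,\beta)$ and $C_2(\alpha,\beta,i)$. Throughout I use two facts.
\begin{itemize}
\item \emph{(Splitting.)} If $x\prec y$ then $S(xy)=S(x)S(y)$. This follows by induction on $|y|$: writing $y=y_1My_2$ with $M$ the maximum, $S(xy)=S(xy_1)S(y_2)M$.
\item \emph{(231-criterion.)} A word $w$ satisfies $S(w)=\mathrm{id}$ exactly when $w$ avoids $231$. I take this as the classical Knuth characterization.
\end{itemize}
So $\pi\in\TT$ if and only if $S(\pi)$ avoids $231$.

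\textbf{The map $C_1$.} Put $m=k+\ell+1$. Since $m$ sits between $\alpha$ and $\beta^{+k}$,
\[
S(C_1(\alpha,\beta))=S(\alpha)\,S(\beta)^{+k}\,m .
\]
Because $S(\alpha)\prec S(\beta)^{+k}$, the splitting fact gives $S(S(C_1))=S(S(\alpha))\,S(S(\beta))^{+k}\,m=12\cdots m$. Hence $C_1(\alpha,\beta)\in\TT$. The left-to-right maxima of $S(C_1)$ are:
\begin{itemize}
\item those of $S(\alpha)$;
\item all those of $S(\beta)^{+k}$, since every such entry exceeds all of $S(\alpha)$;
\item the final entry $m$.
\end{itemize}
This gives \eqref{eq:C1-slmax}, and $\Delta(C_1(\alpha,\beta))=(\alpha,\beta)$ is immediate.

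\textbf{The map $C_2$.} Here I track the relabelling.
\begin{itemize}
\item $\alpha^{+(0,k,a_i)}$ is $\alpha$ with its maximum $k$ replaced by $k+a_i$.
\item $\beta^{+(k-1,a_i+1,k)}$ sends the values $\le a_i$ onto $k,\dots,k+a_i-1$ and the values $>a_i$ above $k+a_i$.
\end{itemize}
Thus $\Delta(C_2(\alpha,\beta,i))=(\alpha,\beta)$. Since $S$ moves the maximum of $\alpha$ to the end,
\[
S(C_2)=S(\alpha)^{-}\,(k+a_i)\,S(\beta)'\,m ,
\]
where $S(\alpha)^-$ is $S(\alpha)$ with its last letter $k$ deleted, and $'$ denotes the relabelling above.

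Write $S(\beta)=u\,a_i\,v$. Since $a_i$ is a left-to-right maximum, $u<a_i$. Since $S(\beta)$ avoids $231$, every letter of $v$ smaller than $a_i$ exceeds every letter of $u$. In particular the letters of $S(\beta)$ that are $\le a_i$ form a prefix. I then check that $S(C_2)$ avoids $231$ by running through where the letter $k+a_i$ can sit in a potential occurrence:
\begin{itemize}
\item as the ``$3$'', all earlier letters lie in $S(\alpha)^-$, which is $\prec$ everything after;
\item as the ``$2$'', everything larger than $k+a_i$ comes after all smaller letters of $S(\beta)'$;
\item as the ``$1$'', nothing smaller than $k+a_i$ lies to its right that could complete the pattern.
\end{itemize}
All other occurrences lie inside $S(\alpha)$ or $S(\beta)$, which avoid $231$.

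The left-to-right maxima of $S(C_2)$ are:
\begin{itemize}
\item those of $S(\alpha)$, with $k$ replaced by $k+a_i$;
\item the images of $a_{i+1},\dots,a_t$;
\item the final $m$.
\end{itemize}
This yields \eqref{eq:C2-slmax}.

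\textbf{Uniqueness and exhaustiveness.} Let $\pi=LnR\in\TT$, so $S(\pi)=S(L)S(R)n$ avoids $231$, and $\std(L),\std(R)\in\TT$ by Fang's result quoted above. Suppose some $x\in L$ with $x\neq\max L$ exceeded some $r\in R$. Then in $S(L)$ the letter $x$ precedes the last letter $\max L$, and the triple $x,\max L,r$ would be a $231$ in $S(\pi)$. So $L\setminus\{\max L\}\prec R$.
\begin{itemize}
\item If also $\max L<\min R$ (or $L$ or $R$ is empty), then $\pi=C_1(\std L,\std R)$.
\item Otherwise $\max L$ falls strictly inside the value range of $R$. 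Avoidance of $231$ with $\max L$ as the ``$2$'' forces the letters of $S(R)$ below $\max L$ to form a prefix of $S(R)$. That prefix is exactly the set of values $\le a_i$ for a unique left-to-right maximum $a_i$ of $S(\std R)$. Then $\pi=C_2(\std L,\std R,i)$.
\end{itemize}
The two forms are distinguished by whether $\max L<\min R$, and $i$ is determined by the value of $\max L$. Hence the decomposition is unique.

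The step I expect to be the main obstacle is the bookkeeping in the $C_2$ case. One must correctly identify which block of $\beta$'s values moves below $k+a_i$ after relabelling, and match it with the prefix property of $231$-avoiding words at a left-to-right maximum. Both the membership check and the converse hinge on this same fact.
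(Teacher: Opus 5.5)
The paper does not prove this proposition; it imports it from Fang (Propositions~3.3--3.6, with Proposition~3.2 for the fact that $\Delta(\pi)\in\TT\times\TT$). Your argument is a correct, self-contained replacement. Knuth's criterion ($\pi\in\TT$ iff $S(\pi)$ avoids $231$) and the splitting identity $S(xy)=S(x)S(y)$ for $x\prec y$ reduce everything to the explicit words $S(C_1)=S(\alpha)\,S(\beta)^{+k}\,m$ and $S(C_2)=S(\alpha)^-\,(k+a_i)\,S(\beta)'\,m$.

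What this buys over the citation is transparency. It shows that the admissible values $a_i$ are exactly the $c\geq 1$ for which the $c$ smallest letters of $S(\beta)$ form a prefix, which explains why the left-to-right maxima of $S(\beta)$ index the $C_2$ case. It also makes the appeal to Fang's Proposition~3.2 unnecessary, since $S(L)$ and $S(R)$ are factors of the $231$-avoiding word $S(\pi)$.

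Three local justifications need repair.

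First, in the membership check for $C_2$, the prefix property does not follow ``in particular'' from the fact that letters of $v$ below $a_i$ exceed all letters of $u$. That fact is $231$-avoidance with $a_i$ as the ``3'', and it is not enough: the word $1342$ with $a_i=3$ satisfies it, yet $\{1,2,3\}$ is not a prefix. The right reason is $231$-avoidance with $a_i$ as the ``2'': a letter of $v$ above $a_i$ followed by one below $a_i$ would give an occurrence. This is exactly the argument you already use in the converse direction.

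Second, $k+a_i$ cannot play the ``1'' because every letter to its left lies in $S(\alpha)^-$ and is smaller than it. Your stated reason is false, since $k,\dots,k+a_i-1$ do lie to its right.

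Third, in the converse, $\max L$ need not lie strictly inside the value range of $R$. For example, $231=C_2(1,1,1)$ has $\max L>\max R$. The correct dichotomy is $\max L<\min R$ (or $L$ or $R$ empty) versus $\max L>\min R$. The case $\max L>\max R$ is simply $a_i=\ell$, i.e.\ $i=\slmax(\beta)$, and your prefix argument handles it without change.
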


	For convenience, given $\alpha,\beta\in\TT$ and $0\leq i\leq \slmax(\beta)$, with $i=0$ whenever $\alpha=\epsilon$, write
	\[
	C(\alpha,\beta,i)=
	\begin{cases}
		C_1(\alpha,\beta),&\mbox{if $i=0$},\\
		C_2(\alpha,\beta,i),&\mbox{if $1\leq i\leq \slmax(\beta)$}.
	\end{cases}
	\]

	Let $\pi\in \TT_n$  and $\pi=C(\alpha, \beta, i)$ with $0\leq i\leq \slmax(\beta)$. 
	From~\eqref{eq:C1}--\eqref{eq:C2-slmax},  one can easily check that if $\alpha\neq \epsilon$, then 
	\begin{equation}\label{eq:C-statistics}
		\begin{aligned}
			\asc(\pi)&=\asc(\alpha)+\asc(\beta)+1,\\
			\ldr(\pi)&=\ldr(\alpha),\\
			\rmax(\pi)&=1+\rmax(\beta),\\
			\slmax(\pi)
			&=\slmax(\alpha)+\slmax(\beta)-i+1.
		\end{aligned}
	\end{equation}
	Moreover, if $\alpha=\epsilon$, 
  then
  \begin{equation}\label{eq:C-statistics-alpha}
  		\begin{aligned}
	\asc(\pi)&=\asc(\beta),\\
	\ldr(\pi)&=1+\ldr(\beta),\\
	\rmax(\pi)&=1+\rmax(\beta),\\
		\slmax(\pi)
	&=\slmax(\beta)+1.
		\end{aligned}
	\end{equation}
	
	We next describe the indecomposable permutations in $\TT$.
	Given permutations $\sigma$ and $\tau$, define their direct sum by
	\[
	\sigma\oplus\tau=\sigma\cdot\tau^{+|\sigma|}.
	\]
	We refer to $\sigma\oplus\tau$ as the \emph{direct sum} of $\sigma$ and $\tau$.
	A nonempty permutation is \emph{decomposable} if it admits a decomposition $\sigma\oplus\tau$ where both $\sigma$ and $\tau$ are nonempty; otherwise, it is \emph{indecomposable}.
	Since
	\[
	S(\sigma\oplus\tau)=S(\sigma)\oplus S(\tau),
	\]
	every nonempty $\pi\in\TT$ can be written uniquely as
	\[
	\pi=\pi^{(1)}\oplus\cdots\oplus\pi^{(r)},
	\]
	where each $\pi^{(j)}$ is an indecomposable permutation
	in $\TT$ and $r=\comp(\pi)$.
	Such a decomposition is called the {\em direct-sum decomposition} of $\pi$.
	
	The following property follows directly from the definition of the  indecomposable permutations. 
	\begin{lemma}
		\label{lem:indecomposable-2ssp}
		The permutation $C_1(\alpha,\beta)$ is indecomposable
		if and only if $\alpha=\epsilon$.
		The permutation $C_2(\alpha,\beta,i)$ is indecomposable
		if and only if $\alpha$ is indecomposable.
	\end{lemma}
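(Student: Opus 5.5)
The plan is to reason directly from the explicit forms \eqref{eq:C1} and \eqref{eq:C2}, using one elementary characterization: a permutation $\pi\in\S_n$ is decomposable if and only if there is some $1\le j<n$ with $\{\pi_1,\dots,\pi_j\}=[j]$. Throughout, $\pi=C(\alpha,\beta,i)$ with $|\alpha|=k$, $|\beta|=\ell$, and we write $n=k+\ell+1$. In both constructions the maximum $n$ sits at position $k+1$. So every proper prefix $\{\pi_1,\dots,\pi_j\}$ with $j\ge k+1$ contains $n$ and cannot equal $[j]$, since $j<n$. Hence only prefixes of length $j\le k$, which lie inside the image of $\alpha$, can produce a decomposition.

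\emph{Case $C_1$.} Here the first $k$ letters are exactly $\alpha$, which uses the values $[k]$. If $\alpha\ne\epsilon$, then $j=k$ gives $\{\pi_1,\dots,\pi_k\}=[k]$ with $1\le k<n$. Thus $\pi=\alpha\oplus(\text{rest})$ is decomposable. If $\alpha=\epsilon$, then $\pi$ begins with $n$, and no proper prefix is an initial segment $[j]$, so $\pi$ is indecomposable.

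\emph{Case $C_2$.} Now $k,\ell\ge1$, and $\alpha^{+(0,k,a_i)}$ leaves every letter of $\alpha$ smaller than $k$ unchanged, while the letter $k$ becomes $k+a_i>k$.
\begin{itemize}
\item Suppose $\alpha=\sigma\oplus\tau$ with $|\sigma|=j$, $1\le j<k$. The letter $k$ of $\alpha$ lies in the $\tau$ part, so the first $j$ letters of $\pi$ coincide with $\sigma$ and form $[j]$. Hence $\pi$ is decomposable.
\item Conversely, suppose a prefix of $\pi$ of length $j\le k$ forms $[j]$. Because $a_i\ge1$, the modified letter $k+a_i$ exceeds $k\ge j$, so it cannot lie in this prefix. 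The prefix is therefore also a prefix of $\alpha$ with value set $[j]$. If $j=k$, then the prefix would have to contain the letter $k$ of $\alpha$, which it does not; so in fact $j<k$, and $\alpha$ is decomposable.
\end{itemize}
Combining the two directions, $C_2(\alpha,\beta,i)$ is indecomposable exactly when $\alpha$ is.

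The only point needing care is the $C_2$ converse, specifically ruling out the prefix of length exactly $k$. This uses $a_i\ge1$, which holds because the $a_i$ are entries of $S(\beta)$. Everything else is a direct inspection of the formulas.
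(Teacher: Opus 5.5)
Your proof is correct, and it takes the same approach as the paper, which only remarks that the lemma follows directly from the definition of indecomposability. You supply the missing details: the prefix characterization, the observation that $n$ sits at position $k+1$, and the use of $a_i\geq 1$ to rule out a prefix of length exactly $k$ in the $C_2$ case.
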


	\subsection{A decomposition of $\beta(1,0)$-trees}
	\label{subsec:tree-decomposition}
	
	We next describe the decomposition of $\beta(1,0)$-trees
	introduced by Claesson, Kitaev, and Steingr\'imsson~\cite{CKS}.
	A $\beta(1,0)$-tree with at least one edge is called
	\emph{indecomposable} if its root has exactly one child. Otherwise, it is called {\em decomposable}.  
	Denote by $\overline{\B}_n$ the set of indecomposable
	trees in $\B_n$ for $n\geq1$.
	The tree $\circ$ is neither indecomposable nor decomposable.
	
	Let $U$ and $V$ be $\beta(1,0)$-trees, each with at least
	one edge. Define $U\oplus V$ to be the tree whose root
	label is the sum of the root labels of $U$ and $V$,
	and whose root subtrees are those of $U$ followed by
	those of $V$.
	
	For a $\beta(1,0)$-tree $U$, let $\rootlab(U)$ denote
	the label of its root.
	For an integer $i$ with $1\leq i\leq\rootlab(U)$,
	define $\lambda_i(U)$ to be the tree obtained from $U$ by joining a new root to the
	root of $U$ with an edge and assigning label $i$
	to both nodes.
	For example, Fig.~\ref{fig:lambda-tree} shows a tree
	$U$ with root label $3$ and the tree $\lambda_2(U)$.
	
	\begin{figure}[htbp]
		\centering
		\begin{tikzpicture}[
			scale=0.85,
			transform shape,
			x=1cm,y=1cm,
			vertex/.style={
				circle,draw,fill=white,
				inner sep=0pt,minimum size=5pt
			},
			every label/.style={font=\normalsize},
			line width=0.6pt
			]
			\node[vertex,label=above:$3$] (u) at (0,0) {};
			\node[vertex,label=below:$1$] (u1) at (-0.8,-0.9) {};
			\node[vertex,label=below:$1$] (u2) at (0,-0.9) {};
			\node[vertex,label=below:$1$] (u3) at (0.8,-0.9) {};
			\draw (u)--(u1) (u)--(u2) (u)--(u3);
			\node at (0,-1.65) {$U$};
			
			\draw[->] (1.4,-0.4)--(2.6,-0.4)
			node[midway,above] {$\lambda_2$};
			
			\node[vertex,label=above:$2$] (r) at (4,0.9) {};
			\node[vertex,label=right:$2$] (v) at (4,0) {};
			\node[vertex,label=below:$1$] (v1) at (3.2,-0.9) {};
			\node[vertex,label=below:$1$] (v2) at (4,-0.9) {};
			\node[vertex,label=below:$1$] (v3) at (4.8,-0.9) {};
			\draw (r)--(v)
			(v)--(v1) (v)--(v2) (v)--(v3);
			\node at (4,-1.65) {$\lambda_2(U)$};
		\end{tikzpicture}
		\caption{An example of the operation $\lambda_i$.}
		\label{fig:lambda-tree}
	\end{figure}
	
	The following proposition follows
	from~\cite[Section~3]{CKS}.
	
	\begin{proposition}
		\label{prop:cks-tree-decomposition}
		Let $B$ be a $\beta(1,0)$-tree with at least one edge.
		\begin{enumerate}
			\item
			The tree $B$ can be written uniquely as
			\[
			B=B_1\oplus\cdots\oplus B_r,
			\]
			where each $B_j$ is an indecomposable
			$\beta(1,0)$-tree and $r=\sub(B)$.
			
			\item
			If $B$ is indecomposable with at least two edges, then it can be
			written uniquely as
			\[
			B=\lambda_i(U),
			\qquad 1\leq i\leq\rootlab(U),
			\]
			where $U$ is a $\beta(1,0)$-tree and
			$i=\rootlab(B)$. If $B$ has only one edge, then $B=\lambda_1(\circ)$.
		\end{enumerate}
	\end{proposition}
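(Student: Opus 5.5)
The proof is a direct unpacking of the labelling conditions. In both parts I would give the decomposition explicitly, check that each piece is again a $\beta(1,0)$-tree, and then observe that the pieces are forced by $B$, which gives uniqueness. The only point requiring care is that the root label of a $\beta(1,0)$-tree must \emph{equal} the sum of its children's labels, while every other internal node only satisfies an inequality. So whenever a node changes status (root $\leftrightarrow$ non-root), its label has to be reassigned and the relevant condition rechecked.

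For part (1), let $c_1,\ldots,c_r$ be the children of the root of $B$ in left-to-right order, so $r=\sub(B)$. For each $j$, let $B_j$ consist of a new root joined by one edge to the subtree of $B$ rooted at $c_j$, with the new root labelled by the label of $c_j$.
\begin{itemize}
\item The root condition holds in $B_j$, since its root has the single child $c_j$.
\item If $c_j$ is a leaf, it has label $1$ as required.
\item If $c_j$ is internal, it is a non-root internal node in both $B$ and $B_j$, with the same children, so its inequality is inherited.
\item All other nodes keep their labels and children.
\end{itemize}
Hence each $B_j$ is an indecomposable $\beta(1,0)$-tree. The operation $\oplus$ is clearly associative, and $B_1\oplus\cdots\oplus B_r$ has root subtrees $c_1,\ldots,c_r$ in order. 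Its root label is the sum of the labels of the $c_j$, which is $\rootlab(B)$ by the root condition, so it equals $B$. For uniqueness, an indecomposable summand contributes exactly one root subtree, and its root label is determined by that subtree. So the summands of any such decomposition must be exactly the $B_j$ above.

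For part (2), let $B$ be indecomposable, so its root has a single child $u$, and by the root condition $\rootlab(B)=\mathrm{label}(u)=:i$.
\begin{itemize}
\item If $B$ has one edge, then $u$ is a leaf, $i=1$, and $B=\lambda_1(\circ)$.
\item If $B$ has at least two edges, then $u$ is internal. Let $U$ be the subtree rooted at $u$, with the label of $u$ replaced by the sum $s$ of its children's labels, so that $U$ satisfies the root condition. Every other node of $U$ is unchanged, so $U$ is a $\beta(1,0)$-tree with $\rootlab(U)=s$. Since $u$ was a non-root internal node of $B$, we have $1\le i\le s=\rootlab(U)$. By construction, $\lambda_i(U)=B$.
\end{itemize}

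Uniqueness in part (2) is immediate: $i$ must equal $\rootlab(B)$. The tree $U$ must be the subtree below the root's child, and its root label is forced by the root condition. As a consistency check, for any $U$ and $1\le i\le\rootlab(U)$, the tree $\lambda_i(U)$ is indeed an indecomposable $\beta(1,0)$-tree. Its root has the single child of label $i$, satisfying the root condition. That child's label $i$ is at most the sum $\rootlab(U)$ of its children's labels, satisfying the inequality for a non-root internal node.

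There is no serious obstacle here. The one point needing attention is the relabelling of $u$ when it changes from an internal non-root node in $B$ to the root of $U$. It is exactly the inequality $i\le\rootlab(U)$ that encodes this change and makes the correspondence $B\leftrightarrow(U,i)$ bijective.
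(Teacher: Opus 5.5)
Your argument is correct and complete. The paper gives no proof of its own here; it simply attributes the statement to Section~3 of Claesson, Kitaev, and Steingr\'imsson. Your direct check from the labelling rules is the routine verification that this citation stands in for: peeling off the root subtrees in part (1), and in part (2) relabelling the root's child by the sum of its children's labels, with $1\le i\le\rootlab(U)$ encoding exactly the non-root inequality.
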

	
	We will use the following properties of the tree operations
	(see~\cite[Lemmas~4 and~5]{CKS}).
	For $\beta(1,0)$-trees $U$ and $V$, each with at least
	one edge, and
	$1\leq i\leq\rootlab(U)$,  it is easily checked that
	\begin{equation}\label{eq:tree-sum-statistics}
		\begin{aligned}
			\sub(U\oplus V)&=\sub(U)+\sub(V),\\
			\leaves(U\oplus V)&=\leaves(U)+\leaves(V),\\
			\lsub(U\oplus V)&=\lsub(U),\\
			\rpath(U\oplus V)&=\rpath(V).
			\end{aligned}
			\end{equation}
				For a $\beta(1,0)$-tree $U$, we have
			\begin{equation}
				\label{eq:tree-sum-statistics-lambda}
					\begin{aligned}
			\sub(\lambda_i(U))&=1,\\
			\leaves(\lambda_i(U))&=\leaves(U),\\
			\lsub(\lambda_i(U))
			&=
			\begin{cases}
				1+\lsub(U),&i=1,\\
				\lsub(U),&i\geq2,
			\end{cases}\\
			\rpath(\lambda_i(U))&=1+\rpath(U).
		\end{aligned}
	\end{equation}

	\subsection{The bijection $\psi$}
	\label{subsec:psi}
	
	In this subsection, we construct a bijection
	$\psi:\TT_n\longrightarrow\B_n$ using the decomposition
	of two-stack sortable permutations presented in
	Subsection~\ref{subsec:2ssp-decomposition}.
	
	We first define a statistic on $\beta(1,0)$-trees.
	Given $B\in\B_n$ and an internal node $x$ on the leftmost path of $B$,  we define $\lcap_B(x)$ as the sum of the  labels of all children of $x$ other than  the leftmost child of $x$. Let the internal nodes on the leftmost path of $B$ be $r_1, r_2, \ldots, r_k$. We then set
	\[
	\lcap(B)=k+\sum_{i=1}^{k}\lcap_B(r_i).
	\]
	We call $\lcap(B)$ the \emph{left-path capacity} of $B$
	and set $\lcap(\circ)=0$. For example, for the tree $B$ in
	Fig.~\ref{fig:beta-tree-example}, we have
	$$\lcap(B)=3+(1+2+1)=7.$$
	
	We now define $\psi$ recursively.
	Set $\psi(\epsilon)=\circ$.
	
	By Proposition~\ref{prop:fang-decomposition},
	every nonempty permutation $\pi\in\TT$ can be written
	uniquely as $C(\alpha,\beta,i)$ with $0\leq i\leq \slmax(\beta)$.
	Let $L=\psi(\alpha)$ and $R=\psi(\beta)$.
	We consider the following two cases.
	
	\medskip
	\noindent
	\textbf{Case 1:} $\alpha=\epsilon$.\\
	In this case, $i=0$.
	Define $\psi(\pi)=\lambda_1(R)$.
	For example, let $\alpha=\epsilon$ and $\beta=12$.
	Then $\pi=C(\epsilon,12,0)=312$, and
	$\psi(312)=\lambda_1(\psi(12))$, as shown in
	Fig.~\ref{fig:psi-case1-example}.
	
	\begin{figure}[htbp]
		\centering
		\begin{tikzpicture}[
			scale=0.85,
			transform shape,
			x=1cm,y=1cm,
			vertex/.style={
				circle,draw,fill=white,
				inner sep=0pt,minimum size=5pt
			},
			every label/.style={font=\normalsize},
			tree caption/.style={
				align=center,anchor=north,inner sep=0pt
			},
			line width=0.6pt
			]
			\begin{scope}
				\node[vertex,label=right:$2$]
				(R0) at (0.00,-0.80) {};
				\node[vertex,label=right:$1$]
				(R1) at (-0.60,-1.60) {};
				\node[vertex,label=right:$1$]
				(R2) at (0.60,-1.60) {};
				\draw (R0)--(R1) (R0)--(R2);
				\node[tree caption] at (0.00,-2.05)
				{$R=\psi(12)$};
			\end{scope}
			
			\draw[->] (1.25,-0.80)--(3.15,-0.80)
			node[midway,above] {$\lambda_1$};
			
			\begin{scope}
				\node[vertex,label=right:$1$]
				(B0) at (4.80,0.00) {};
				\node[vertex,label=right:$1$]
				(B1) at (4.80,-0.80) {};
				\node[vertex,label=right:$1$]
				(B2) at (4.20,-1.60) {};
				\node[vertex,label=right:$1$]
				(B3) at (5.40,-1.60) {};
				\draw (B0)--(B1) (B1)--(B2) (B1)--(B3);
				\node[tree caption] at (4.80,-2.05)
				{$\psi(C(\epsilon,12,0))=\psi(312)$};
			\end{scope}
		\end{tikzpicture}
		\caption{The construction of
			$\psi(312)=\lambda_1(\psi(12))$ in Case~1.}
		\label{fig:psi-case1-example}
	\end{figure}

	\medskip
	\noindent
	\textbf{Case 2:} $\alpha\neq\epsilon$.\\
	Set $H=\lambda_1(R)$, and let
	$r_0,r_1,\ldots,r_{d+1}$ be the nodes on its leftmost
	path, listed from the root to the leaf.
	Here $r_0$ is the new root and $r_1$ is the original
	root of $R$.
	Let $x_s$ be the label of $r_s$, and set
	$a_s=\lcap_H(r_s)$ for $0\leq s\leq d$.
	Then $x_0=x_1=x_{d+1}=1$ and $a_0=0$.
	By the definition of $\lcap$, we have 
	\begin{equation}\label{eq:A-statistic}
		\lcap(H)=d+1+\sum_{s=1}^{d}a_s.
	\end{equation}
	Set
	\begin{equation}\label{eq:psi-blocks}
		b_s=1+x_{s+1}+a_s-x_s
		\qquad(1\leq s\leq d).
	\end{equation}
	These integers are positive and satisfy
	\begin{equation}\label{eq:psi-block-sum}
		\sum_{s=1}^{d}b_s
		=d+\sum_{s=1}^{d}a_s
		=\lcap(H)-1
		=\lcap(R).
	\end{equation}

	If $i=0$, set $(p,q)=(d+1,1)$.
	If $i\geq1$, let $(p,q)$ be the unique pair satisfying
	\begin{equation}\label{eq:psi-mark}
		i=\sum_{s=p}^{d}b_s-q+1,
		\qquad 1\leq p\leq d,\qquad 1\leq q\leq b_p.
	\end{equation}
	
	Let $l=\rootlab(L)$ and $j=l+q$.
	The tree $\psi(\pi)$ is obtained from $H$ as follows:
	\begin{enumerate}
		\item
		Attach the subtrees of the root of $L$ as the leftmost subtrees of $r_{p-1}$, preserving their relative order.
		
		\item
		Increase the label of $r_k$ by $j-1$ for all $0\leq k\leq p-1$, and increase the label of $r_p$ by $q-1$.
		All other labels remain unchanged.
	\end{enumerate}

	\medskip
	For example, let $L=\psi(12)$ and $R=\psi(15342)$.
	Since $\slmax(15342)=4$, we may take $0\leq i\leq4$
	in $C(12,15342,i)$.
	For $H=\lambda_1(R)$, we have $d=2$, $b_1=b_2=2$
	and $\lcap(H)=5$.
	The corresponding permutations and pairs $(p,q)$ are given by
	\[
	\begin{array}{c|c|c}
		i & C(12,15342,i) & (p,q)\\
		\hline
		0 & 12837564 & (3,1)\\
		1 & 13827564 & (2,2)\\
		2 & 15827463 & (2,1)\\
		3 & 16827453 & (1,2)\\
		4 & 17826453 & (1,1)
	\end{array}
	\]
	The five resulting trees are shown in
	Fig.~\ref{fig:psi-example}.
	
	\begin{figure}[htbp]
		\centering
		\begin{tikzpicture}[
			scale=0.85,
			transform shape,
			x=1cm,y=1cm,
			vertex/.style={
				circle,draw,fill=white,
				inner sep=0pt,minimum size=5pt
			},
			every label/.style={font=\normalsize},
			tree caption/.style={
				align=center,anchor=north,inner sep=0pt
			},
			line width=0.6pt
			]
			\begin{scope}
				\node[vertex,label=right:$2$]
				(l0) at (0.00,-2.40) {};
				\node[vertex,label=right:$1$]
				(l1) at (-0.35,-3.20) {};
				\node[vertex,label=right:$1$]
				(l2) at (0.35,-3.20) {};
				\draw (l0)--(l1) (l0)--(l2);
				\node[tree caption] at (0.00,-3.65)
				{$L=\psi(12)$};
			\end{scope}
			
			\begin{scope}
				\node[vertex,label=right:$2$]
				(R0) at (4.50,-0.80) {};
				\node[vertex,label=right:$2$]
				(R1) at (4.50,-1.60) {};
				\node[vertex,label=right:$1$]
				(R2) at (3.80,-2.40) {};
				\node[vertex,label=right:$1$]
				(R3) at (4.50,-2.40) {};
				\node[vertex,label=right:$1$]
				(R4) at (5.20,-2.40) {};
				\node[vertex,label=right:$1$]
				(R5) at (5.20,-3.20) {};
				\draw (R0)--(R1)
				(R1)--(R2) (R1)--(R3) (R1)--(R4)
				(R4)--(R5);
				\node[tree caption] at (4.50,-3.65)
				{$R=\psi(15342)$};
			\end{scope}
			
			\begin{scope}
				\node[vertex,label=right:$1$,label=left:$r_0$]
				(h0) at (9.20,0.00) {};
				\node[vertex,label=right:$1$,label=left:$r_1$]
				(r0) at (9.20,-0.80) {};
				\node[vertex,label=right:$2$,label=left:$r_2$]
				(r1) at (9.20,-1.60) {};
				\node[vertex,label=right:$1$,label=left:$r_3$]
				(r2) at (8.50,-2.40) {};
				\node[vertex,label=right:$1$]
				(r3) at (9.20,-2.40) {};
				\node[vertex,label=right:$1$]
				(r4) at (9.90,-2.40) {};
				\node[vertex,label=right:$1$]
				(r5) at (9.90,-3.20) {};
				\draw (h0)--(r0) (r0)--(r1)
				(r1)--(r2) (r1)--(r3) (r1)--(r4)
				(r4)--(r5);
				\node[tree caption] at (9.20,-3.65)
				{$H=\lambda_1(R)$};
			\end{scope}
			
			\begin{scope}
				\node[vertex,label=right:$3$]
				(c0) at (13.90,0.00) {};
				\node[vertex,label=right:$3$]
				(c1) at (13.90,-0.80) {};
				\node[vertex,draw=red,label={[red]right:$4$}]
				(c2) at (13.90,-1.60) {};
				
				\node[vertex,draw=red,label={[red]right:$1$}]
				(c3) at (12.50,-2.40) {};
				
				\node[vertex,draw=red,label={[red]right:$1$}]
				(c4) at (13.20,-2.40) {};
				\node[vertex,label=right:$1$]
				(c5) at (13.90,-2.40) {};
				\node[vertex,label=right:$1$]
				(c6) at (14.60,-2.40) {};
				\node[vertex,label=right:$1$]
				(c7) at (15.30,-2.40) {};
				\node[vertex,label=right:$1$]
				(c8) at (15.30,-3.20) {};
				\draw (c0)--(c1) (c1)--(c2)
				 (c2)--(c5)
				(c2)--(c6) (c2)--(c7) (c7)--(c8);
				\draw[red]
				(c2)--(c3) (c2)--(c4);
				\node[tree caption] at (13.90,-3.65)
				{$\psi(C(12,15342,0))$\\[2pt]
					$(p,q)=(3,1)$};
			\end{scope}
			
			\begin{scope}
				\node[vertex,label=right:$4$]
				(t1n0) at (-0.35,-5.10) {};
				\node[vertex,draw=red,label={[red]right:$4$}]
				(t1n1) at (-0.35,-5.90) {};
				\node[vertex,draw=red,label={[red]right:$1$}]
				(t1n2) at (-1.40,-6.70) {};
				\node[vertex,draw=red,label={[red]right:$1$}]
				(t1n3) at (-0.70,-6.70) {};
				\node[vertex,label=right:$3$]
				(t1n4) at (0.70,-6.70) {};
				\node[vertex,label=right:$1$]
				(t1n5) at (0.00,-7.50) {};
				\node[vertex,label=right:$1$]
				(t1n6) at (0.70,-7.50) {};
				\node[vertex,label=right:$1$]
				(t1n7) at (1.40,-7.50) {};
				\node[vertex,label=right:$1$]
				(t1n8) at (1.40,-8.30) {};
				\draw (t1n0)--(t1n1)
				(t1n1)--(t1n4)
				(t1n4)--(t1n5) (t1n4)--(t1n6)
				(t1n4)--(t1n7) (t1n7)--(t1n8);
				\draw[red]
				(t1n1)--(t1n2) (t1n1)--(t1n3);
				\node[tree caption] at (0.00,-8.75)
				{$\psi(C(12,15342,1))$\\[2pt]
					$(p,q)=(2,2)$};
			\end{scope}
			
			\begin{scope}
				\node[vertex,label=right:$3$]
				(t2n0) at (4.35,-5.10) {};
				\node[vertex,draw=red,label={[red]right:$3$}]
				(t2n1) at (4.35,-5.90) {};
				\node[vertex,draw=red,label={[red]right:$1$}]
				(t2n2) at (3.30,-6.70) {};
				\node[vertex,draw=red,label={[red]right:$1$}]
				(t2n3) at (4.00,-6.70) {};
				\node[vertex,label=right:$2$]
				(t2n4) at (5.40,-6.70) {};
				\node[vertex,label=right:$1$]
				(t2n5) at (4.70,-7.50) {};
				\node[vertex,label=right:$1$]
				(t2n6) at (5.40,-7.50) {};
				\node[vertex,label=right:$1$]
				(t2n7) at (6.10,-7.50) {};
				\node[vertex,label=right:$1$]
				(t2n8) at (6.10,-8.30) {};
				\draw (t2n0)--(t2n1)
				(t2n1)--(t2n4)
				(t2n4)--(t2n5) (t2n4)--(t2n6)
				(t2n4)--(t2n7) (t2n7)--(t2n8);
				\draw[red]
				(t2n1)--(t2n2) (t2n1)--(t2n3);
				\node[tree caption] at (4.70,-8.75)
				{$\psi(C(12,15342,2))$\\[2pt]
					$(p,q)=(2,1)$};
			\end{scope}
			
			\begin{scope}
				\node[vertex,draw=red,label={[red]right:$4$}]
				(t3n0) at (9.05,-5.10) {};
				\node[vertex,draw=red,label={[red]right:$1$}]
				(t3n1) at (8.00,-5.90) {};
				\node[vertex,draw=red,label={[red]right:$1$}]
				(t3n2) at (8.70,-5.90) {};
				\node[vertex,label=right:$2$]
				(t3n3) at (10.10,-5.90) {};
				\node[vertex,label=right:$2$]
				(t3n4) at (10.10,-6.70) {};
				\node[vertex,label=right:$1$]
				(t3n5) at (9.40,-7.50) {};
				\node[vertex,label=right:$1$]
				(t3n6) at (10.10,-7.50) {};
				\node[vertex,label=right:$1$]
				(t3n7) at (10.80,-7.50) {};
				\node[vertex,label=right:$1$]
				(t3n8) at (10.80,-8.30) {};
				\draw 
				(t3n0)--(t3n3) (t3n3)--(t3n4)
				(t3n4)--(t3n5) (t3n4)--(t3n6)
				(t3n4)--(t3n7) (t3n7)--(t3n8);
				\draw[red]
				(t3n0)--(t3n1) (t3n0)--(t3n2);
				\node[tree caption] at (9.40,-8.75)
				{$\psi(C(12,15342,3))$\\[2pt]
					$(p,q)=(1,2)$};
			\end{scope}
			
			\begin{scope}
				\node[vertex,draw=red,label={[red]right:$3$}]
				(t4n0) at (13.75,-5.10) {};
				\node[vertex,draw=red,label={[red]right:$1$}]
				(t4n1) at (12.70,-5.90) {};
				\node[vertex,draw=red,label={[red]right:$1$}]
				(t4n2) at (13.40,-5.90) {};
				\node[vertex,label=right:$1$]
				(t4n3) at (14.80,-5.90) {};
				\node[vertex,label=right:$2$]
				(t4n4) at (14.80,-6.70) {};
				\node[vertex,label=right:$1$]
				(t4n5) at (14.10,-7.50) {};
				\node[vertex,label=right:$1$]
				(t4n6) at (14.80,-7.50) {};
				\node[vertex,label=right:$1$]
				(t4n7) at (15.50,-7.50) {};
				\node[vertex,label=right:$1$]
				(t4n8) at (15.50,-8.30) {};
				\draw 
				(t4n0)--(t4n3) (t4n3)--(t4n4)
				(t4n4)--(t4n5) (t4n4)--(t4n6)
				(t4n4)--(t4n7) (t4n7)--(t4n8);
				\draw[red]
				(t4n0)--(t4n1) (t4n0)--(t4n2);
				\node[tree caption] at (14.10,-8.75)
				{$\psi(C(12,15342,4))$\\[2pt]
					$(p,q)=(1,1)$};
			\end{scope}
		\end{tikzpicture}
		\caption{Examples of the construction of $\psi$
			with $L=\psi(12)$ and $R=\psi(15342)$.}
		\label{fig:psi-example}
	\end{figure}
	
	\begin{lemma}\label{prop:Gamma-statistics}
		For $n\geq1$ and $\pi\in\TT_n$,  we have  $\psi(\pi)\in\B_n$ and
		\begin{equation}\label{eq:Gamma-statistics}
			(\lcap,\rpath,\leaves,\lsub)\psi(\pi)
			=
			(\slmax,\rmax,1+\asc,\ldr)\pi.
		\end{equation}
	\end{lemma}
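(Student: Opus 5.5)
We argue by strong induction on $n$, using the unique decomposition $\pi=C(\alpha,\beta,i)$ of Proposition~\ref{prop:fang-decomposition}. Put $L=\psi(\alpha)$ and $R=\psi(\beta)$. Since $|\alpha|+|\beta|=n-1$, the induction hypothesis gives $L\in\B_{|\alpha|}$ and $R\in\B_{|\beta|}$, together with~\eqref{eq:Gamma-statistics} for $\alpha$ and $\beta$ whenever they are nonempty. For empty arguments we use $\psi(\epsilon)=\circ$ and the stated conventions, which agree with~\eqref{eq:Gamma-statistics}. Two things must be checked: that $\psi(\pi)$ is a $\beta(1,0)$-tree with $n$ edges, and that each of the four statistics transforms as in~\eqref{eq:C-statistics} or~\eqref{eq:C-statistics-alpha}.

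\textbf{Case 1} ($\alpha=\epsilon$). Here $\psi(\pi)=\lambda_1(R)$, which lies in $\B_n$ by Proposition~\ref{prop:cks-tree-decomposition}. By~\eqref{eq:tree-sum-statistics-lambda}, $\lambda_1$ preserves $\leaves$, adds one to $\rpath$ and adds one to $\lsub$ (since $i=1$). Its leftmost path is that of $R$ with one new internal node of label $1$ and $\lcap$-contribution $0$, so $\lcap(\lambda_1(R))=\lcap(R)+1$. These match~\eqref{eq:C-statistics-alpha}. The degenerate case $R=\circ$, $\pi=1$, is checked directly.

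\textbf{Case 2} ($\alpha\neq\epsilon$). First I check that $\psi(\pi)$ is a $\beta(1,0)$-tree. The edge count is $1+|\beta|+|\alpha|=n$. Labels only change on $r_0,\dots,r_p$, and children are only added at $r_{p-1}$, so only these nodes need checking.
\begin{itemize}
\item At $r_{p-1}$ the children's label sum grows by $\rootlab(L)=l$, plus $q-1$ from the relabelled $r_p$. Its own label grows by $j-1=l+q-1$, so the slack is unchanged.
\item For $k<p-1$, the node $r_k$ and its leftmost child $r_{k+1}$ both grow by $j-1$.
\item At $r_p$ the label grows by $q-1$, so the condition becomes $x_p+q-1\le x_{p+1}+a_p$, that is, $q\le b_p$. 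This is exactly the constraint in~\eqref{eq:psi-mark}, and it explains why the $b_s$ are defined as in~\eqref{eq:psi-blocks}.
\item The root stays equal to the sum of its children. When $p=1$ the new root is $r_0$ itself, and it receives the subtrees of $L$'s root together with $r_1$ of label $1+q-1$, so its sum is $l+q=1+(j-1)$.
\end{itemize}
The choice $i=0$, with $(p,q)=(d+1,1)$, is treated the same way, attaching at $r_d$. Positivity of the $b_s$ uses the $\beta(1,0)$ inequality at $r_s$ in $R$ (with $r_1$ of $H$ having label $1$).

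Next come the statistics. The rightmost path is that of $H$ unless $L$'s subtrees become rightmost, which cannot happen because $r_{p-1}$ still has its leftmost-path child to the right of them. Hence $\rpath=1+\rpath(R)=1+\rmax(\beta)=\rmax(\pi)$. Leaves add up, giving $\leaves(L)+\leaves(R)=\asc(\alpha)+\asc(\beta)+2=1+\asc(\pi)$. The new leftmost path is $r_0,\dots,r_{p-1}$ followed by the leftmost path of $L$ below its root. After relabelling, $r_0,\dots,r_{p-1}$ have labels $\ge j\ge 2$ (since $l\ge1$ and $q\ge1$), so they contribute nothing to $\lsub$. Therefore $\lsub=\lsub(L)=\ldr(\alpha)=\ldr(\pi)$.

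The main obstacle is $\lcap$. Its internal nodes are now $r_0,\dots,r_{p-1}$ together with the non-root internal nodes on $L$'s leftmost path. I would compute
\[
\lcap(\psi(\pi))=p+\sum_{k=0}^{p-1}\lcap'(r_k)+\bigl(\lcap(L)-1-\lcap_L(\mathrm{root})\bigr).
\]
At $r_{p-1}$, the new non-leftmost children are $L$'s root children other than the first, contributing $\lcap_L(\mathrm{root})$ (which cancels the subtracted term), plus $r_p$ and the old non-leftmost children, contributing $(x_p+q-1)+a_{p-1}$. The nodes $r_k$ with $k<p-1$ keep $a_k$. Telescoping with~\eqref{eq:psi-blocks} should then give
\[
\lcap(\psi(\pi))=\lcap(L)+\sum_{s=1}^{p-1}b_s+q+\text{const},
\]
which, by $\sum_s b_s=\lcap(R)$ and~\eqref{eq:psi-mark}, equals $\lcap(L)+\lcap(R)-i+1=\slmax(\alpha)+\slmax(\beta)-i+1$, as required by~\eqref{eq:C2-slmax}. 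The case $i=0$ must land on $\slmax(\alpha)+\slmax(\beta)+1$, consistent with $(p,q)=(d+1,1)$. I expect the bookkeeping of which node is $r_p$ versus an attached child, and the off-by-one constants, to be the delicate part; I would verify the final formula against the five examples in Fig.~\ref{fig:psi-example} before trusting it.
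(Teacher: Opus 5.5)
Your argument follows the paper's proof step for step. You use the same induction over $\pi=C(\alpha,\beta,i)$ and the same label checks at $r_0,\dots,r_p$ (including $q\le b_p$ at $r_p$). You give the same reasons why $\rpath$, $\leaves$ and $\lsub$ behave as required, and you account correctly for which nodes contribute to $\lcap$.

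The gap is that you stop at ``$+\text{const}$'' in the one identity that carries the content of the lemma, and propose to fix the constant by checking examples. As written, $\lcap(\psi(\pi))=\slmax(\pi)$ is asserted rather than proved. It does close, with constant $0$. Substitute $\lcap'(r_k)=a_k$ for $k<p-1$ and $\lcap'(r_{p-1})=\lcap_L(\mathrm{root})+x_p+q-1+a_{p-1}$ into your expression; the $\lcap_L(\mathrm{root})$ terms cancel and you get
\[
\lcap(\psi(\pi))=\lcap(L)+(p-1)+\sum_{s=0}^{p-1}a_s+x_p+q-1 .
\]
Since $a_0=0$ and $x_1=1$, telescoping \eqref{eq:psi-blocks} gives $\sum_{s=1}^{p-1}b_s=(p-1)+\sum_{s=1}^{p-1}a_s+x_p-1$. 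Hence $\lcap(\psi(\pi))=\lcap(L)+\sum_{s=1}^{p-1}b_s+q$, which is exactly \eqref{eq:psi-lcap}.

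The case $i=0$ is the same computation with $p=d+1$ and $x_{d+1}=q=1$. By \eqref{eq:psi-block-sum} it gives $\lcap(L)+\lcap(R)+1$. For $i\ge1$, your last step via \eqref{eq:psi-mark} then gives $\lcap(L)+\lcap(R)-i+1=\slmax(\alpha)+\slmax(\beta)-i+1$, as required.

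You should also make one further point explicit. For $i\ge1$, the pair $(p,q)$ in \eqref{eq:psi-mark} exists only if $i\le\sum_s b_s=\lcap(R)$. It is the induction hypothesis $\lcap(R)=\slmax(\beta)$, together with $i\le\slmax(\beta)$, that guarantees this. So the induction is also what makes $\psi(\pi)$ well defined, not just what computes its statistics.
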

	
	\begin{proof}
		We proceed by induction on $n$.
		The assertion holds for $n=1$.
		Suppose that $n\geq2$ and the assertion holds
		for all smaller permutations.
		We use the notation in the definition of $\psi(\pi)$. We have two cases.
		
		\noindent{\bf Case 1:} $\alpha=\epsilon$.\\
		By the induction hypothesis, we have $R\in \B_{n-1}$ satisfying  
			\begin{equation}\label{eq:psi-induction-statistics-alpha}
		(\lcap,\rpath,\leaves,\lsub)R
	=
	(\slmax,\rmax,1+\asc,\ldr)\beta.
	\end{equation}
		Recall that $\psi(\pi)=\lambda_1(R)$.  Therefore, from \eqref{eq:C-statistics-alpha},  \eqref{eq:tree-sum-statistics-lambda} and \eqref{eq:psi-induction-statistics-alpha},    we have $\psi(\pi)\in \B_n$ satisfying 
		$$
		\begin{aligned}
			\lcap(\psi(\pi))=1+\lcap(R)=1+\slmax(\beta)=\slmax(\pi),\\
			\rpath(\psi(\pi))=1+\rpath(R)=1+\rmax(\beta)=\rmax(\pi),\\
			\leaves(\psi(\pi))=\leaves(R)=1+\asc(\beta)=1+\asc(\pi),\\
			\lsub(\psi(\pi))=1+\lsub(R)=1+\ldr(\beta)=\ldr(\pi).
			\end{aligned}
		$$
	This establishes~\eqref{eq:Gamma-statistics} in the case $\alpha=\epsilon$.

		\noindent{\bf Case 2:} $\alpha\neq \epsilon$.\\
		By the induction hypothesis,  we have
		$L\in\B_{|\alpha|}$ and $R\in\B_{|\beta|}$ satisfying 
		\begin{equation}\label{eq:psi-induction-statistics}
			\begin{aligned}
				\lcap(L)&=\slmax(\alpha),
				&\qquad \lcap(R)&=\slmax(\beta),\\
				\leaves(L)&=1+\asc(\alpha),
				&\qquad \leaves(R)&=1+\asc(\beta),\\
				\lsub(L)&=\ldr(\alpha),
				&\qquad \rpath(R)&=\rmax(\beta).
			\end{aligned}
		\end{equation}
		If $i=0$, then $(p,q)=(d+1,1)$.
		If $i\geq1$, then
		$1\leq i\leq\slmax(\beta)=\lcap(R)$.
		In this case, the positivity of the $b_s$ and
		\eqref{eq:psi-block-sum} ensure that there is a unique
		pair $(p,q)$ satisfying~\eqref{eq:psi-mark}.
		
		We first verify the labeling conditions.
		If $p\leq d$, then $q\leq b_p$ implies $x_p+q-1\leq x_{p+1}+a_p$.
		If $p=d+1$, then $q=1$, and the leaf $r_{d+1}$ retains label $1$.
		For each node among $r_0,\ldots,r_{p-1}$, both the node label and the sum of the labels of its children increase by $j-1$.
		Hence the resulting tree is a $\beta(1,0)$-tree with root label $j$ and $|\alpha|+|\beta|+1=n$ edges.
		
		Compared with $L$, the new leftmost path has
		$p-1$ additional edges, and the original node $r_p$
		becomes a non-leftmost child of $r_{p-1}$
		with label $x_p+q-1$.
		Hence, 
		\begin{equation}\label{eq:psi-lcap}
			\begin{aligned}
				\lcap(\psi(\pi))
				&=\lcap(L)+(p-1)
				+\sum_{s=1}^{p-1}a_s+x_p+q-1\\
				&=\lcap(L)+\sum_{s=1}^{p-1}b_s+q,
			\end{aligned}
		\end{equation}
		where the second equality follows from \eqref{eq:psi-blocks}.
		The inserted subtrees lie to the left of all
		original children of $r_{p-1}$, so the rightmost
		path of $H$ remains unchanged.
		All leaves of $L$ and $H$ are preserved.
		Moreover, the nodes preceding the inserted subtrees
		on the new leftmost path have labels at least
		$j\geq2$.
		Therefore,  we deduce that
		\begin{equation}\label{eq:psi-tree-statistics}
			\begin{aligned}
				\rpath(\psi(\pi))
				&=\rpath(H)=1+\rpath(R),\\
				\leaves(\psi(\pi))
				&=\leaves(L)+\leaves(H)
				=\leaves(L)+\leaves(R),\\
				\lsub(\psi(\pi))
				&=\lsub(L).
			\end{aligned}
		\end{equation}
		
		Notice that
		\begin{equation}\label{eq:psi-index-sum}
			\begin{aligned}
				\sum_{s=1}^{p-1}b_s+q
				&=\sum_{s=1}^{d}b_s-i+1\,\,\,\, &\mbox{(by  \eqref{eq:psi-mark})}\\
				&=\lcap(R)-i+1\,\,\,\, &\mbox{(by  \eqref{eq:psi-block-sum})}\\
				&
				=\slmax(\beta)-i+1. \,\,\,\, &\mbox{(by  	\eqref{eq:psi-induction-statistics})}
			\end{aligned}
		\end{equation}
		
		We now obtain
		\begin{equation}\label{eq:psi-permutation-statistics}
			\begin{array}{lll}
				\slmax(\pi)
				&=
				\slmax(\alpha)+\slmax(\beta)-i+1 &(\mbox{by \eqref{eq:C1-slmax},\,\eqref{eq:C2-slmax}})\\
				&=
				\slmax(\alpha)+\sum\limits_{s=1}^{p-1}b_s+q &(\mbox{by \eqref{eq:psi-index-sum}})\\
				&=\lcap(L)+\sum\limits_{s=1}^{p-1}b_s+q &(\mbox{by \eqref{eq:psi-induction-statistics}})\\
				&=
				\lcap(\psi(\pi))&(\mbox{by \eqref{eq:psi-lcap}}),\\[5pt]

				\rmax(\pi)
				&=
				1+\rmax(\beta)&(\mbox{by \eqref{eq:C-statistics}})\\
				& =1+\rpath(R) & (\mbox{by \eqref{eq:psi-induction-statistics}})\\
				&=\rpath(\psi(\pi))& (\mbox{by \eqref{eq:psi-tree-statistics}}),\\[5pt]

				1+\asc(\pi)
				&=
				2+\asc(\alpha)+\asc(\beta)& (\mbox{by \eqref{eq:C-statistics}})\\
				&=\leaves(L)+\leaves(R)& (\mbox{by \eqref{eq:psi-induction-statistics}})\\
				&=
				\leaves(\psi(\pi))&	(\mbox{by \eqref{eq:psi-tree-statistics}}),
			\end{array}
		\end{equation}
		and 
		$$
		\ldr(\pi)
		\xlongequal{\mbox{by \eqref{eq:C-statistics}}}
		\ldr(\alpha)
		\xlongequal{\mbox{by \eqref{eq:psi-induction-statistics}}}
		\lsub(L)
		\xlongequal{\mbox{by \eqref{eq:psi-tree-statistics}}}
		\lsub(\psi(\pi)).
		$$
		This proves~\eqref{eq:Gamma-statistics} in the case $\alpha\neq \epsilon$, completing the proof.
	\end{proof}

	\begin{proposition}\label{prop:psi}
		For $n\geq0$, the map
		$\psi:\TT_n\longrightarrow\B_n$ is a bijection.
	\end{proposition}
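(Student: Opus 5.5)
We argue by induction on $n$ and construct the inverse map explicitly, using Lemma~\ref{prop:Gamma-statistics} to control the range of the parameter $i$. Since $|\TT_n|$ and $|\B_n|$ are both given by \eqref{eq:2ssp-enumeration} (West's formula for $\TT_n$, and the Jacquard--Schaeffer bijection together with Tutte's count of nonseparable maps for $\B_n$), injectivity would already suffice. We nevertheless prefer to describe the inverse recursively, since that also gives surjectivity directly. The cases $n=0,1$ are trivial. Assume $\psi$ is a bijection $\TT_m\to\B_m$ for all $m<n$, and let $T\in\B_n$ with $n\ge1$.

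\textbf{Recovering the case.} We claim $T$ comes from Case~1 if and only if the root of $T$ has label $1$.
In Case~1 the root label is $1$. In Case~2 the root label is $j=l+q\ge 2$, since $L$ has at least one edge and so $l\ge1$.
If $\rootlab(T)=1$, then $T$ is indecomposable, because a root labelled $1$ has children with labels summing to $1$, hence exactly one child. So $T=\lambda_1(U)$ by Proposition~\ref{prop:cks-tree-decomposition}. We set $\beta=\psi^{-1}(U)$, $\alpha=\epsilon$ and $\pi=C(\epsilon,\beta,0)$.

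\textbf{Recovering the data in Case~2.} Let $\rootlab(T)=j\ge2$ and let $r_0,r_1,\dots$ be the nodes on the leftmost path of $T$. We recover $p$ as the smallest index such that the leftmost child $c$ of $r_{p-1}$ has label $1$.
Justification: in the forward construction, $r_0,\dots,r_{p-1}$ all carry labels $\ge j\ge2$. The nodes preceding the inserted subtrees on the new leftmost path also have labels at least $j$, because they are roots of children of $\rootlab(L)$; the first node of the inserted part, the leftmost child of the root of $L$, has some label. This step needs care. The cleaner criterion is that the leftmost subtrees of $r_{p-1}$ are exactly the root subtrees of $L$.

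We locate the split as follows. Walk down from $r_0$ along the nodes $r_0,\dots,r_{p-1}$ while the node has exactly one child. Each such node has label $x_s+j-1$ with $x_s$ recovered from $H$, and $x_0=x_1=1$.
The intended recovery: $r_{p-1}$ is the first node on the path from the root whose children include the root subtrees of $L$, and its last $\deg_H(r_{p-1})$ children are the original children. Since $x_0=1$ and the root of $H$ has one child, the tree $H$ has a unary spine. The identification hinges on the root-label equality of $L$: the labels of the first several children of $r_{p-1}$ must sum to exactly $l=j-q$, which pins down how many children belong to $L$.

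\textbf{Main obstacle.} The hard step is proving that this decoding is unambiguous. Concretely, given $T$ we must show there is exactly one triple $(p,q,\text{split of the children of } r_{p-1})$ such that, after undoing the label increases, we obtain
\begin{itemize}
\item a valid $\beta(1,0)$-tree $L$ whose root label equals the sum of its root's children, and
\item a valid $H=\lambda_1(R)$ with $1\le q\le b_p$.
\end{itemize}
Our plan is to read $p$ off from a label-deficit along the leftmost path. For $s<p-1$, the node $r_s$ has label exactly $j-1+x_s$ and lcap data $a_s$ inherited from $H$. At $r_{p-1}$ the children of $L$ add exactly $l$ to the children's sum while the label increases by $j-1$. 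We would show that the quantity $(\text{sum of children labels})-(\text{node label})$ along the leftmost path, compared with the corresponding quantity in $H$, detects $r_{p-1}$ as the first node where the excess jumps. Once $p$ is known, $q$ is determined as the label of $r_p$ minus $x_p$ plus one, and $l=j-q$. The children of $L$ are then the shortest prefix of children of $r_{p-1}$ with label sum $l$, and this sum is forced.

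Having recovered $L$, $R$ and $(p,q)$, we set $i$ from \eqref{eq:psi-mark}, or $i=0$ if $p=d+1$. The bounds $1\le i\le\lcap(R)=\slmax(\beta)$ from Lemma~\ref{prop:Gamma-statistics}, combined with Proposition~\ref{prop:fang-decomposition}, guarantee that $\pi=C(\psi^{-1}(L),\psi^{-1}(R),i)\in\TT_n$ is well defined. Finally, we check that the two constructions are mutually inverse, which follows from the uniqueness in \eqref{eq:psi-mark} and in Proposition~\ref{prop:fang-decomposition}.
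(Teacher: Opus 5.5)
Your framework matches the paper's: induction, deciding the case by whether $\rootlab(T)=1$, and using $|\TT_n|=|\B_n|$ so that injectivity suffices. Case~1 is correct. The gap is in Case~2, the step you call the main obstacle: none of the decoding rules you propose works.

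\begin{itemize}
\item The rule ``smallest index whose leftmost child has label $1$'' overshoots. In $\psi(\pi)$ the leftmost child of $r_{p-1}$ is the leftmost child of the root of $L$, and its label can be anything up to $\rootlab(L)$, for instance $L=\lambda_2(U)$ with $\rootlab(U)\geq 2$.
\item Walking down ``while the node has exactly one child'' fails, because $r_1$, the root of $R$, may have several children.
\item The ``excess jump'' cannot detect anything. The construction raises the label of each $r_k$, $k\le p-1$, by $j-1$, and raises the sum of its children's labels by exactly $j-1$ as well (at $r_{p-1}$ the children gain $l+(q-1)=j-1$). So (children sum) $-$ (label) is unchanged along the whole path. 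Comparing with $H$ also presupposes knowing $H$.
\item Computing $q$ as ``label of $r_p$ minus $x_p$ plus one'', and then $l=j-q$, is circular, since $x_p$ is a label of the tree $H$ you are trying to recover.
\end{itemize}

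The missing idea is to use $j=\rootlab(T)$ as a threshold.
\begin{itemize}
\item In $\psi(\pi)$ the nodes $r_0,\dots,r_{p-1}$ have labels $x_k+j-1\geq j$. Every child of the root of $L$ has label at most $l=j-q<j$. Hence $r_{p-1}$ is the topmost node $u$ on the leftmost path whose leftmost child has label less than $j$.
\item Let $c_1,\dots,c_t$ be the labels of the children of $u$. The children coming from $L$ sum to $l<j$, and the next child $r_p$ has label $x_p+q-1\geq q$. So the split is the unique $g$ with $c_1+\cdots+c_{g-1}<j\leq c_1+\cdots+c_g$.
\item This gives $l$ and $q=j-l$ directly from $T$. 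Then $L$ is determined. So is $H$: decrease the labels from the root to $u$ by $j-1$, and the $g$-th child by $q-1$. From these, $R$, $p$ and $i$ follow.
\end{itemize}
Without this or an equivalent criterion, your argument establishes neither injectivity nor your explicit inverse. If you want the explicit inverse to give surjectivity by itself, you must also check, for arbitrary $T$, that the decoded $L$ and $H=\lambda_1(R)$ are valid and that $1\leq q\leq b_p$. The latter is the $\beta(1,0)$ condition at the $g$-th child. Otherwise rely on $|\TT_n|=|\B_n|$, as the paper does.
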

	
	\begin{proof}
		We first show by induction on $n$ that the map $\psi$ is injective; that is, we may recover a unique $\pi\in\TT_n$ from $\psi(\pi)=B$.
		For $n=0$, the tree $\circ$ corresponds to the permutation $\epsilon$.
		Let $B\in\B_n$ with $n\geq1$ and let $\pi=C(\alpha, \beta, i)$. In order to recover $\pi$, it suffices to recover $\alpha$, $\beta$ and $i$. 
		We distinguish two cases.
		
		\medskip
		\noindent
		\textbf{Case 1:} The root of $B$ has label $1$.\\
		The root has a unique child. Remove the root and replace the new root label by the sum of its children's labels.
		Denote the resulting tree by $R$. 
		If removing the root leaves a single node, we regard it as $\circ$. The same convention applies in Case 2.
		By the induction hypothesis, we can 
		recover $\beta$ from $R$  by letting $R=\psi(\beta)$, and set $\alpha=\epsilon$ and $i=0$.
		
		\medskip
		\noindent
		\textbf{Case 2:} The root of $B$ has label $j\geq2$.\\
		Starting from the root of $B$, find the topmost node $u$ on its leftmost path whose leftmost child has label less than $j$.
		Such a node exists, since the leftmost leaf has label $1$.
		The label of $u$ is at least $j$.
		
		Let $c_1,\ldots,c_t$ denote the labels of the children of $u$, ordered from left to right.
		Since $c_1<j\leq\sum\limits_{s=1}^{t}c_s$, there exists a unique integer $g\geq2$ such that
		\[
		\sum_{s=1}^{g-1}c_s<j\leq\sum_{s=1}^{g}c_s.
		\]
		Set $l=\sum_{s=1}^{g-1}c_s$ and $q=j-l$, and let $w$ be the $g$-th child of $u$.
		Then $1\leq q\leq c_g$.
		Recover $L$ and $H$ as follows:
		\begin{enumerate}
			\item Remove the leftmost $g-1$ subtrees of $u$ and attach their roots to a new root labeled $l$, preserving their relative order.
			Denote this tree by $L$.
			
			\item In the remaining tree, decrease the label of each node on the path from the root to $u$ by $j-1$, and decrease the label of $w$ by $q-1$.
			Denote the resulting tree by $H$.
		\end{enumerate}
		
		The labeling conditions for $L$ and $H$ follow from these operations.
		The root of $H$ and its unique child both have label $1$. 
		Remove the root of $H$ and replace the new root label by the sum of its children's labels to obtain $R$.
		Thus $H=\lambda_1(R)$, and both $L$ and $R$ are $\beta(1,0)$-trees with fewer edges than $B$.
		By the induction hypothesis, we may recover $\alpha$ and $\beta$ from $L$ and $R$  by letting $L=\psi(\alpha)$ and $R=\psi(\beta)$.
		
		Using the notation $r_s$ and $b_s$ for $H$ as in the construction of $\psi(\pi)$, one may verify that $u=r_{p-1}$ and $w=r_p$. If $w$ is a leaf, then $i=0$. Otherwise, we have
		\[
		i=\sum_{s=p}^{d}b_s-q+1,
		\]
		completing the proof that $\psi$ is injective.
		
		It remains to show that the map $\psi$ is surjective. This follows from the equality $|\TT_n|=|\B_n|$, which completes the proof.
	\end{proof}

	\subsection{The bijection $\theta$}
	\label{subsec:Omega}
	
	Let $\overline{\TT}_n$ denote the set of  indecomposable permutations  in $\TT_n$. 
	Relying on $\psi$, we describe a recursive map $\theta: \overline{\TT}_n\rightarrow \overline{\B}_n$.  For $n=1$,  define $\theta(1)$ to be the unique tree in $\overline{\B}_1$.     For $n\geq 2$, write  $\pi=C(\alpha, \beta, i)\in  \overline{\TT}_n$. By Lemma \ref{lem:indecomposable-2ssp}, we have $\alpha=\epsilon$ when $i=0$, and $\alpha$ is indecomposable and $1\leq i\leq \slmax(\beta)$  otherwise.

	\noindent{\bf Case 1:} $i=0$.\\
	Let $R=\psi(\beta)$	and set  $\theta(\pi)=\lambda_1(R)$.

	\noindent{\bf Case 2:} $i\neq 0$. \\
	In this case, we have  $1\leq i\leq \slmax(\beta)$.   Let $L=\theta(\alpha)$ and $R=\psi(\beta)$. 
	Set $H=\lambda_1(R)$.
	Let $r_0,r_1,\ldots,r_{d+1}$ be the nodes on the leftmost
	path of $H$, listed from the root to the leaf.
	Let $x_s$ be the label of $r_s$, and set
	$a_s=\lcap_H(r_s)$ for $0\leq s\leq d$.
	As in~\eqref{eq:psi-blocks}, set
	\[
	b_s=1+x_{s+1}+a_s-x_s
	\qquad(1\leq s\leq d).
	\]
	Let $(p,q)$ be the unique pair satisfying
	\begin{equation}\label{eq:Omega-mark}
		i=\sum_{s=p}^{d}b_s-q+1,
		\qquad 1\leq p\leq d,\qquad 1\leq q\leq b_p.
	\end{equation}
	Set $\ell=\rootlab(L)+q$.
	The tree $N=\theta(\pi)$ is obtained from $H$
	as follows:
	\begin{enumerate}
		\item
		Remove the root of $L$ and attach its unique subtree as 
		the leftmost subtree of $r_p$.
		
		\item
		Increase the label of each of
		$r_0,r_1,\ldots,r_p$ by $\ell-1$.
		All other labels remain unchanged.
	\end{enumerate}

	\medskip
	For example, let $\alpha=21$ and $\beta=13827564$.
	Set
	\[
	L=\theta(\alpha)=\lambda_1(\lambda_1(\circ)),
	\qquad
	R=\psi(\beta),
	\qquad
	H=\lambda_1(R).
	\]
	We have $\lcap(R)=6$, $d=2$, $b_1=4$ and $b_2=2$.
	The corresponding permutations and pairs $(p,q)$
	are given by
	\[
	\begin{array}{c|c|c}
		i & C(21,13827564,i) & (p,q)\\
		\hline
		1 & 31(11)25(10)49786 & (2,2)\\
		2 & 51(11)24(10)39786 & (2,1)\\
		3 & 71(11)24(10)39685 & (1,4)\\
		4 & 81(11)24(10)39675 & (1,3)\\
		5 & 91(11)24(10)38675 & (1,2)\\
		6 & (10)1(11)24938675 & (1,1)
	\end{array}
	\]
	Fig.~\ref{fig:Omega-example} shows $L$, $R$, $H$,
	$\theta(C(21,13827564,1))$ and
	$\theta(C(21,13827564,5))$.
	
	\begin{figure}[htbp]
		\centering
		\begin{tikzpicture}[
			scale=0.85,
			transform shape,
			x=1cm,y=1cm,
			vertex/.style={
				circle,draw,fill=white,
				inner sep=0pt,minimum size=5pt
			},
			every label/.style={font=\normalsize},
			tree caption/.style={
				align=center,anchor=north,inner sep=0pt
			},
			line width=0.6pt
			]
			
			% Input tree L
			\begin{scope}
				\node[vertex,label=right:$1$]
				(linput0) at (0,-2.25) {};
				\node[vertex,label=right:$1$]
				(linput1) at (0,-3.00) {};
				\node[vertex,label=right:$1$]
				(linput2) at (0,-3.75) {};
				
				\draw (linput0)--(linput1)--(linput2);
				
				\node[tree caption] at (0,-4.15)
				{$L=\theta(21)$};
			\end{scope}
			
			% Input tree R
			\begin{scope}[xshift=5cm]
				\node[vertex,label=right:$4$]
				(rinput0) at (-0.35,-0.75) {};
				\node[vertex,label=right:$4$]
				(rinput1) at (-0.35,-1.50) {};
				\node[vertex,label=right:$1$]
				(rinput2) at (-1.05,-2.25) {};
				\node[vertex,label=right:$1$]
				(rinput3) at (-0.35,-2.25) {};
				\node[vertex,label=right:$3$]
				(rinput4) at (1.05,-2.25) {};
				\node[vertex,label=right:$1$]
				(rinput5) at (0.35,-3.00) {};
				\node[vertex,label=right:$1$]
				(rinput6) at (1.05,-3.00) {};
				\node[vertex,label=right:$1$]
				(rinput7) at (1.75,-3.00) {};
				\node[vertex,label=right:$1$]
				(rinput8) at (1.75,-3.75) {};
				
				\draw
				(rinput0)--(rinput1)
				(rinput1)--(rinput2)
				(rinput1)--(rinput3)
				(rinput1)--(rinput4)
				(rinput4)--(rinput5)
				(rinput4)--(rinput6)
				(rinput4)--(rinput7)
				(rinput7)--(rinput8);
				
				\node[tree caption] at (0,-4.15)
				{$R=\psi(13827564)$};
			\end{scope}
			
			% Auxiliary tree H
			\begin{scope}[xshift=10cm]
				\node[vertex,label=right:$1$]
				(hinput0) at (-0.35,0.00) {};
				\node[vertex,label=right:$1$]
				(hinput1) at (-0.35,-0.75) {};
				\node[vertex,label=right:$4$]
				(hinput2) at (-0.35,-1.50) {};
				\node[vertex,label=right:$1$]
				(hinput3) at (-1.05,-2.25) {};
				\node[vertex,label=right:$1$]
				(hinput4) at (-0.35,-2.25) {};
				\node[vertex,label=right:$3$]
				(hinput5) at (1.05,-2.25) {};
				\node[vertex,label=right:$1$]
				(hinput6) at (0.35,-3.00) {};
				\node[vertex,label=right:$1$]
				(hinput7) at (1.05,-3.00) {};
				\node[vertex,label=right:$1$]
				(hinput8) at (1.75,-3.00) {};
				\node[vertex,label=right:$1$]
				(hinput9) at (1.75,-3.75) {};
				
				\draw
				(hinput0)--(hinput1)
				(hinput1)--(hinput2)
				(hinput2)--(hinput3)
				(hinput2)--(hinput4)
				(hinput2)--(hinput5)
				(hinput5)--(hinput6)
				(hinput5)--(hinput7)
				(hinput5)--(hinput8)
				(hinput8)--(hinput9);
				
				\node[left=3pt] at (hinput0) {$r_0$};
				\node[left=3pt] at (hinput1) {$r_1$};
				\node[left=3pt] at (hinput2) {$r_2$};
				\node[left=3pt] at (hinput3) {$r_3$};
				
				\node[tree caption] at (0,-4.15)
				{$H=\lambda_1(R)$};
			\end{scope}
			
			% Image for i=1
			\begin{scope}[xshift=2.5cm,yshift=-5.25cm]
				\node[vertex,draw=black,label={[text=black]right:$3$}]
				(n0) at (-0.35,0.00) {};
				\node[vertex,draw=black,label={[text=black]right:$3$}]
				(n1) at (-0.35,-0.75) {};
				\node[vertex,draw=red,label={[text=red]right:$6$}]
				(n2) at (-0.35,-1.50) {};
				\node[vertex,draw=red,label={[text=red]right:$1$}]
				(n3) at (-1.75,-2.25) {};
				\node[vertex,draw=red,label={[text=red]right:$1$}]
				(n4) at (-1.75,-3.00) {};
				\node[vertex,draw=black,label={[text=black]right:$1$}]
				(n5) at (-1.05,-2.25) {};
				\node[vertex,draw=black,label={[text=black]right:$1$}]
				(n6) at (-0.35,-2.25) {};
				\node[vertex,draw=black,label={[text=black]right:$3$}]
				(n7) at (1.05,-2.25) {};
				\node[vertex,draw=black,label={[text=black]right:$1$}]
				(n8) at (0.35,-3.00) {};
				\node[vertex,draw=black,label={[text=black]right:$1$}]
				(n9) at (1.05,-3.00) {};
				\node[vertex,draw=black,label={[text=black]right:$1$}]
				(n10) at (1.75,-3.00) {};
				\node[vertex,draw=black,label={[text=black]right:$1$}]
				(n11) at (1.75,-3.75) {};
				
				\draw[black] (n0)--(n1);
				\draw[black] (n1)--(n2);
				\draw[red] (n2)--(n3);
				\draw[red] (n3)--(n4);
				\draw[black] (n2)--(n5);
				\draw[black] (n2)--(n6);
				\draw[black] (n2)--(n7);
				\draw[black] (n7)--(n8);
				\draw[black] (n7)--(n9);
				\draw[black] (n7)--(n10);
				\draw[black] (n10)--(n11);
				
				\node[tree caption] at (0,-4.15)
				{$\theta(C(21,13827564,1))$\\[2pt]
					$(p,q)=(2,2)$};
			\end{scope}
			
			% Image for i=5
			\begin{scope}[xshift=7.5cm,yshift=-5.25cm]
				\node[vertex,draw=black,label={[text=black]right:$3$}]
				(v0) at (-0.88,0.00) {};
				\node[vertex,draw=red,label={[text=red]right:$3$}]
				(v1) at (-0.88,-0.75) {};
				\node[vertex,draw=red,label={[text=red]right:$1$}]
				(v2) at (-1.75,-1.50) {};
				\node[vertex,draw=red,label={[text=red]right:$1$}]
				(v3) at (-1.75,-2.25) {};
				\node[vertex,draw=black,label={[text=black]right:$4$}]
				(v4) at (0.00,-1.50) {};
				\node[vertex,draw=black,label={[text=black]right:$1$}]
				(v5) at (-1.05,-2.25) {};
				\node[vertex,draw=black,label={[text=black]right:$1$}]
				(v6) at (-0.35,-2.25) {};
				\node[vertex,draw=black,label={[text=black]right:$3$}]
				(v7) at (1.05,-2.25) {};
				\node[vertex,draw=black,label={[text=black]right:$1$}]
				(v8) at (0.35,-3.00) {};
				\node[vertex,draw=black,label={[text=black]right:$1$}]
				(v9) at (1.05,-3.00) {};
				\node[vertex,draw=black,label={[text=black]right:$1$}]
				(v10) at (1.75,-3.00) {};
				\node[vertex,draw=black,label={[text=black]right:$1$}]
				(v11) at (1.75,-3.75) {};
				
				\draw[black] (v0)--(v1);
				\draw[red] (v1)--(v2);
				\draw[red] (v2)--(v3);
				\draw[black] (v1)--(v4);
				\draw[black] (v4)--(v5);
				\draw[black] (v4)--(v6);
				\draw[black] (v4)--(v7);
				\draw[black] (v7)--(v8);
				\draw[black] (v7)--(v9);
				\draw[black] (v7)--(v10);
				\draw[black] (v10)--(v11);
				
				\node[tree caption] at (0,-4.15)
				{$\theta(C(21,13827564,5))$\\[2pt]
					$(p,q)=(1,2)$};
			\end{scope}
			
		\end{tikzpicture}
		\caption{The construction of $\theta(C(\alpha,\beta,i))$
			for $\alpha=21$, $\beta=13827564$ and $i=1,5$.}
		\label{fig:Omega-example}
	\end{figure}
	\medskip
	
	\begin{lemma}\label{lem:theta}
		For $n\geq1$ and $\pi\in\overline{\TT}_n$, we have
		$\theta(\pi)\in\overline{\B}_n$ and
		\begin{equation}\label{eq:Omega-statistics}
			(\rpath,\leaves,\lsub)\theta(\pi)
			=
			(\rmax,1+\asc,\ldr)\pi.
		\end{equation}
	\end{lemma}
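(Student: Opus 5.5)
We argue by induction on $n$, mirroring the proof of Lemma~\ref{prop:Gamma-statistics}. For $n=1$, $\theta(1)=\lambda_1(\circ)$ is the single-edge tree. Both sides of \eqref{eq:Omega-statistics} equal $(1,1,1)$, and $\theta(1)$ is indecomposable. For $n\geq2$ we write $\pi=C(\alpha,\beta,i)$. By Lemma~\ref{lem:indecomposable-2ssp}, either $i=0$ and $\alpha=\epsilon$, or $i\geq1$ and $\alpha$ is indecomposable. We treat these two cases separately.

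\textbf{Case $i=0$.} Here $\theta(\pi)=\lambda_1(\psi(\beta))$. By Lemma~\ref{prop:Gamma-statistics}, $\psi(\beta)\in\B_{n-1}$ with $(\rpath,\leaves,\lsub)\psi(\beta)=(\rmax,1+\asc,\ldr)\beta$. The tree $\lambda_1(\psi(\beta))$ lies in $\B_n$, its root has exactly one child, and \eqref{eq:tree-sum-statistics-lambda} gives $(\rpath,\leaves,\lsub)=(1+\rmax(\beta),1+\asc(\beta),1+\ldr(\beta))$. By \eqref{eq:C-statistics-alpha} this is $(\rmax,1+\asc,\ldr)\pi$.

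\textbf{Case $i\geq1$.} The pair $(p,q)$ exists and is unique with $1\le p\le d$, by the positivity of the $b_s$ and \eqref{eq:psi-block-sum}, since $i\le\slmax(\beta)=\lcap(R)$.

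First we check the labeling. The node $r_p$ gains the root subtree of $L$ as a new leftmost child, whose label is $c=\rootlab(L)$ because $L$ is indecomposable. The label of $r_p$ rises by $\ell-1=c+q-1$, so we need $x_p+c+q-1\le c+x_{p+1}+a_p$. This is exactly $q\le b_p$. For $r_0,\dots,r_{p-1}$, the node label and the sum of its children's labels both rise by $\ell-1$, since each of these nodes has $r_{k+1}$ as a child. The root keeps label equal to the sum of its children's labels. The edge count is $(|\alpha|-1)+(|\beta|+1)+1=n$. Since $p\ge1$, the root $r_0$ still has the single child $r_1$, so $\theta(\pi)\in\overline{\B}_n$.

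Next the statistics. The inserted subtree sits to the left of everything in $H$, so the rightmost path is unchanged. This gives $\rpath=\rpath(H)=1+\rmax(\beta)=\rmax(\pi)$. The leaves are those of $L$ together with those of $H$, so $\leaves=\leaves(L)+\leaves(R)=2+\asc(\alpha)+\asc(\beta)=1+\asc(\pi)$, using the induction hypothesis for $L=\theta(\alpha)$ and \eqref{eq:C-statistics}.

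\textbf{Main obstacle: $\lsub$.} The new leftmost path is $r_0,\dots,r_p$ followed by the leftmost path of $L$ below its root. We have $\lsub(L)=\ldr(\alpha)$ by induction, which counts the nodes labeled $1$ on $L$'s path excluding $L$'s root. So we must show that none of $r_1,\dots,r_p$ has label $1$ after the relabeling, i.e. that $\ell\ge2$. This holds because $\rootlab(L)\ge1$ and $q\ge1$. The subtree attached from $L$ keeps its labels, so $\lsub(\theta(\pi))=\lsub(L)=\ldr(\alpha)=\ldr(\pi)$ by \eqref{eq:C-statistics}. This completes the induction.
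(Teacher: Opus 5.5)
Your proof is correct and follows the paper's argument essentially step for step. It uses the same induction on $n$ and the same case split. When $\alpha=\epsilon$ you apply Lemma~\ref{prop:Gamma-statistics} and \eqref{eq:tree-sum-statistics-lambda}; when $\alpha$ is indecomposable, the labeling check at $r_p$ reduces to $q\le b_p$, and $\lsub$ is preserved because $\ell\ge 2$. You also spell out a few details the paper leaves implicit, namely the base case and the edge count.
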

	
	\begin{proof}
		We proceed by induction on $n$.
		The assertion holds for $n=1$.
		Suppose that $n\geq2$ and the assertion holds
		for all smaller indecomposable permutations.
		We use the notation in the definition of $\theta(\pi)$.
		We have two cases.
		
		\noindent{\bf Case 1:} $\alpha=\epsilon$.\\
		By  Lemma~\ref{prop:Gamma-statistics}, we have $R\in \B_{n-1}$ satisfying  
		\begin{equation}\label{eq:psi-induction-statistics-alpha-theta}
			(\rpath,\leaves,\lsub)R
			=
			(\rmax,1+\asc,\ldr)\beta.
		\end{equation}
		Recall that $\theta(\pi)=\lambda_1(R)$.  Therefore, from \eqref{eq:C-statistics-alpha},  \eqref{eq:tree-sum-statistics-lambda} and \eqref{eq:psi-induction-statistics-alpha-theta},    we have $\theta(\pi)\in \overline{\B}_n$ satisfying 
		$$
		\begin{aligned}	
			\rpath(\theta(\pi))=1+\rpath(R)=1+\rmax(\beta)=\rmax(\pi),\\
			\leaves(\theta(\pi))=\leaves(R)=1+\asc(\beta)=1+\asc(\pi),\\
			\lsub(\theta(\pi))=1+\lsub(R)=1+\ldr(\beta)=\ldr(\pi).
		\end{aligned}
		$$
		This proves~\eqref{eq:Omega-statistics} in the case $\alpha=\epsilon$.

		\noindent{\bf Case 2:} $\alpha\neq \epsilon$.\\
		By the induction hypothesis and Lemma  \ref{prop:Gamma-statistics}, 
		we have $L\in\overline{\B}_{|\alpha|}$ and
		$R\in\B_{|\beta|}$ satisfying 
		\begin{equation}\label{eq:Omega-psi-statistics}
			\begin{aligned}
				\lcap(R)&=\slmax(\beta),\\
				\leaves(L)&=1+\asc(\alpha),
				&\qquad \leaves(R)&=1+\asc(\beta),\\
				\lsub(L)&=\ldr(\alpha),
				&\qquad \rpath(R)&=\rmax(\beta).
			\end{aligned}
		\end{equation}
		We have $1\leq i\leq\slmax(\beta)=\lcap(R)$.
		The positivity of the $b_s$ and~\eqref{eq:psi-block-sum}
		ensure that there is a unique pair $(p,q)$
		satisfying~\eqref{eq:Omega-mark}.
		
		We first verify the labeling conditions.
		The unique child of the root of $L$ has label
		$\rootlab(L)$.
		At node $r_p$,    $q\leq b_p$ implies
		\[
		x_p+\ell-1
		=x_p+\rootlab(L)+q-1
		\leq x_{p+1}+a_p+\rootlab(L).
		\]
		For each node among $r_0,\ldots,r_{p-1}$,
		both the node label and the sum of the labels
		of its children increase by $\ell-1$.
		Since $p\geq1$, the root retains its unique child.
		Hence the resulting tree is an indecomposable
		$\beta(1,0)$-tree with root label $\ell$
		and $|\alpha|+|\beta|+1=n$ edges.
		
		The inserted subtree lies to the left of all
		original children of $r_p$, so the rightmost
		path of $H$ remains unchanged.
		All leaves of $L$ and $H$ are preserved.
		Moreover, the nodes preceding the inserted subtree
		on the new leftmost path have labels at least
		$\ell\geq2$.
		Therefore,
		\begin{equation}\label{eq:Omega-tree-statistics}
			\begin{aligned}
				\rpath(N)
				&=\rpath(H)=1+\rpath(R),\\
				\leaves(N)
				&=\leaves(L)+\leaves(H)
				=\leaves(L)+\leaves(R),\\
				\lsub(N)&=\lsub(L).
			\end{aligned}
		\end{equation}
		
		We now obtain
		\begin{equation}\label{eq:Omega-permutation-statistics}
			\begin{array}{lll}
				\rmax(\pi)
				&=1+\rmax(\beta)
				&\mbox{(by \eqref{eq:C-statistics})}\\
				&=1+\rpath(R)
				&\mbox{(by \eqref{eq:Omega-psi-statistics})}\\
				&=\rpath(N)
				&\mbox{(by \eqref{eq:Omega-tree-statistics})},\\[5pt]
				1+\asc(\pi)
				&=2+\asc(\alpha)+\asc(\beta)
				&\mbox{(by \eqref{eq:C-statistics})}\\
				&=\leaves(L)+\leaves(R)
				&\mbox{(by \eqref{eq:Omega-psi-statistics})}\\
				&=\leaves(N)
				&\mbox{(by \eqref{eq:Omega-tree-statistics})},
			\end{array}
		\end{equation}
		and
		\[
		\ldr(\pi)
		\xlongequal{\mbox{by \eqref{eq:C-statistics}}}
		\ldr(\alpha)
		\xlongequal{\mbox{by \eqref{eq:Omega-psi-statistics}}}
		\lsub(L)
		\xlongequal{\mbox{by \eqref{eq:Omega-tree-statistics}}}
		\lsub(N).
		\]
		This establishes ~\eqref{eq:Omega-statistics} in the case $\alpha\neq \epsilon$, completing the proof.
	\end{proof}

		\begin{proposition}\label{prop:theta}
			For $n\geq1$, the map
			$\theta:\overline{\TT}_n\longrightarrow\overline{\B}_n$
			is a bijection.
		\end{proposition}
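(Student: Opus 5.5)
We will mirror the proof of Proposition~\ref{prop:psi}: show by induction on $n$ that $\theta$ is injective by explicitly recovering $(\alpha,\beta,i)$ from $N=\theta(\pi)$. Surjectivity will then come from a counting argument, since $|\overline{\TT}_n|=|\overline{\B}_n|$. This equality follows from the bijection $\psi$. By Proposition~\ref{prop:cks-tree-decomposition}, every tree in $\B_n$ is uniquely an ordered $\oplus$-sum of indecomposable trees. Likewise, every permutation in $\TT_n$ is uniquely a direct sum of indecomposables in $\TT$. The generating functions $T(x)$ and $B(x)$ therefore satisfy $T=1/(1-\overline{T})$ and $B=1/(1-\overline{B})$, where $T(x)=\sum_n|\TT_n|x^n$. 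Since $T=B$ by Proposition~\ref{prop:psi}, we get $\overline{T}=\overline{B}$. Lemma~\ref{lem:theta} already gives that $\theta$ maps $\overline{\TT}_n$ into $\overline{\B}_n$.

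For injectivity, the base case $n=1$ is trivial. Let $N\in\overline{\B}_n$ with $n\geq2$; we split on the root label $j$ of $N$.

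\textbf{Case 1: $j=1$.} Such an $N$ can only come from Case~1 of $\theta$, because Case~2 gives root label $\ell=\rootlab(L)+q\geq2$. Delete the root and relabel the new root by the sum of its children's labels. This yields $R$, and by Proposition~\ref{prop:psi} we recover $\beta=\psi^{-1}(R)$, with $\alpha=\epsilon$ and $i=0$.

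\textbf{Case 2: $j\geq2$.} Walk down the leftmost path from the root and let $u$ be the topmost node whose leftmost child $v$ has label less than $j$. Such a $u$ exists because the leftmost leaf has label $1$. We claim $u=r_p$ and $v$ is the root of the unique subtree of $L$. Indeed, $r_0,\dots,r_p$ all carry labels at least $\ell=j$ after the shift. The child of $L$'s root has label $\rootlab(L)=\ell-q<\ell$. Recovery then proceeds in four steps:
\begin{enumerate}
\item Detach the subtree rooted at $v$ and give it a new root with label $\rootlab(v)$; call this tree $L$.
\item Set $q=j-\rootlab(v)$.
\item Decrease by $j-1$ the labels of every node on the path from the root to $u$; this gives $H$.
\item Strip the root of $H$ to obtain $R$, with $H=\lambda_1(R)$.
\end{enumerate}
To check that $L$ and $H$ are valid $\beta(1,0)$-trees, we reverse the labeling computation in Lemma~\ref{lem:theta}. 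For $H$, the subtractions at $r_0,\dots,r_{p-1}$ cancel against their children's sums, and at $r_p$ the inequality is the one in that lemma read backwards. The root of $H$ and its child both get label $1$. The other labels stay positive and satisfy the $\beta(1,0)$ conditions. By induction and Proposition~\ref{prop:psi}, we obtain $\alpha=\theta^{-1}(L)$, which is indecomposable, and $\beta=\psi^{-1}(R)$. With $p$ defined as the depth of $u$ in $H$ and $b_s$ computed from $H$, we set $i=\sum_{s=p}^{d}b_s-q+1$.

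The main obstacle is confirming that this recovered data is consistent. We need $p\leq d$, i.e.\ that $u$ is not the leftmost leaf of $H$, so that \eqref{eq:Omega-mark} applies. We also need $1\leq q\leq b_p$. The bound $q\geq1$ holds because $\rootlab(v)<j$. The bound $q\leq b_p$ must come from the $\beta(1,0)$ condition at $u$ in $N$. I expect to derive both bounds from the inequality $x_p+\ell-1\leq x_{p+1}+a_p+\rootlab(L)$ read in reverse, where $p\leq d$ holds because $u$ is internal in $H$. Once these hold, uniqueness of $(p,q)$ in \eqref{eq:Omega-mark} gives $\theta(C(\alpha,\beta,i))=N$ with a uniquely determined preimage. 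This proves injectivity, and combined with the counting argument, that $\theta$ is a bijection.
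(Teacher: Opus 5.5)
Your proposal is correct and follows essentially the paper's proof. The recovery of $(\alpha,\beta,i)$ is the same: you locate the topmost node on the leftmost path whose leftmost child has label below the root label. Surjectivity is also obtained the same way, by counting indecomposables through the unique direct-sum decompositions; your identity $T=1/(1-\overline{T})$ is the paper's first-component recursion. The consistency checks you call the main obstacle are automatic for injectivity, since on the image of $\theta$ you already know $u=r_p$ with $1\le p\le d$ and $1\le q\le b_p$. They would only matter for a direct surjectivity proof, which the counting argument makes unnecessary.
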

		
		\begin{proof}
			We first show by induction on $n$ that the map $\theta$
			is injective; that is, we may recover a unique
			$\pi\in\overline{\TT}_n$ from $\theta(\pi)=N$.
			For $n=1$, the unique tree corresponds to the permutation $1$.
			Let $\pi=C(\alpha,\beta,i)\in\overline{\TT}_n$, where
			$n\geq2$, and set $N=\theta(\pi)$.
			In order to recover $\pi$, it suffices to recover
			$\alpha$, $\beta$ and $i$.
			We distinguish two cases.
			
			\medskip
			\noindent
			\textbf{Case 1:} The root of $N$ has label $1$.
			
			The root has a unique child.
			Remove the root and replace the new root label
			by the sum of its children's labels.
			Denote the resulting tree by $R$.
			By Proposition~\ref{prop:psi}, we can  recover $\beta$ from $R$ by letting $R=\psi(\beta)$,
			and set $\alpha=\epsilon$ and $i=0$.
			
			\medskip
			\noindent
			\textbf{Case 2:} The root of $N$ has label $\ell\geq2$.
			
			Starting from the root of $N$, find the topmost node $z$
			on its leftmost path whose label is less than $\ell$.
			Such a node exists, since the leftmost leaf has label $1$.
			Let $y$ be the parent of $z$.
			Then $y$ is not the root and has at least two children.
			
			Recover $L$ and $H$ as follows:
			\begin{enumerate}
				\item
				Remove the subtree rooted at $z$ and attach its root
				to a new root with the same label as $z$.
				Denote this tree by $L$.
				
				\item
				In the remaining tree, decrease the label of each node
				on the path from the root to $y$ by $\ell-1$.
				Denote the resulting tree by $H$.
			\end{enumerate}
			
			The labeling conditions for $L$ and $H$ follow
			from these operations, and $L$ is indecomposable.
			The root of $H$ and its unique child both have label $1$.
			Remove the root of $H$ and replace the new root label
			by the sum of its children's labels to obtain $R$.
			Thus $H=\lambda_1(R)$, and both $L$ and $R$ are
			$\beta(1,0)$-trees with fewer edges than $N$.
			By the induction hypothesis and Proposition~\ref{prop:psi},
			we may recover $\alpha$ and $\beta$ from $L$ and $R$ by letting $L=\theta(\alpha)$ and $R=\psi(\beta)$.
			
			Using the notation $r_s$ and $b_s$ for $H$ as in the
			construction of $\theta(\pi)$, one may verify that
			$y=r_p$ and $z$ is the root of the inserted subtree.
			Set $q=\ell-\rootlab(L)$. Then
			\[
			i=\sum_{s=p}^{d}b_s-q+1,
			\]
			completing the proof that $\theta$ is injective.
			
			It remains to show that the map $\theta$ is surjective. In the following, we aim to show that
				$|\overline{\TT}_n|=|\overline{\B}_n|$ for all $n\geq 1$. We proceed by induction on $n$. Clearly, the assertion holds for $n=1$. Now we assume that $n\geq2$ and 	$|\overline{\TT}_m|=|\overline{\B}_m|$ holds  for all $m<n$.  
			By the unique direct-sum decompositions in
			Subsections~\ref{subsec:2ssp-decomposition} and ~\ref{subsec:tree-decomposition}, each decomposable
			object is uniquely determined by its first indecomposable
			component and the remaining nonempty object.
			Counting according to the size of the first component gives
			\[
			\begin{aligned}
				|\overline{\TT}_n|
				&=|\TT_n|-\sum_{k=1}^{n-1}
				|\overline{\TT}_k|\,|\TT_{n-k}|,\\
				|\overline{\B}_n|
				&=|\B_n|-\sum_{k=1}^{n-1}
				|\overline{\B}_k|\,|\B_{n-k}|.
			\end{aligned}
			\]
			Recall that we have
			$|\TT_m|=|\B_m|$ for every $m\geq0$.
			By the induction hypothesis, we have
		  $|\overline{\TT}_k|=|\overline{\B}_k|$
			for all $k<n$. Therefore, the two right-hand sides above are equal, which implies that
			$|\overline{\TT}_n|=|\overline{\B}_n|$
			for every $n\geq1$.
			Together with the injectivity of $\theta$,
			this proves the surjectivity and completes the proof.
		\end{proof}

	\subsection{The bijection $F$}
	\label{sec:bijection}
	 
	Set $F(\epsilon)=\circ$.
	For a nonempty permutation $\pi\in\TT$, write its
	unique decomposition into indecomposable permutations as
	\[
	\pi=\pi^{(1)}\oplus\cdots\oplus\pi^{(r)},
	\qquad r=\comp(\pi).
	\]
	Define
	\begin{equation}\label{eq:F-sum}
		F(\pi)
		=\theta(\pi^{(1)})\oplus\cdots\oplus\theta(\pi^{(r)}).
	\end{equation}
	In particular, $F(\pi)=\theta(\pi)$ when $\pi$ is
	indecomposable and $F(1)=\lambda_1(\circ)$.
	
	\medskip
	For example, let $\beta=13827564$ and
	$\pi=142(12)36(11)5(10)897$.
	We have $\pi=1\oplus C(21,\beta,1)$.
	$\theta(C(21,\beta,1))$ is the image
	for $i=1$ in Fig.~\ref{fig:Omega-example}.
	By~\eqref{eq:F-sum}, $F(\pi)$
	is shown in Fig.~\ref{fig:F-example}.
	In this example, both quadruples in~\eqref{eq:main-statistics} are equal to $(2,7,1,5)$.
	
	\begin{figure}[htbp]
		\centering
		\begin{tikzpicture}[
			scale=0.85,
			transform shape,
			x=1cm,y=1cm,
			vertex/.style={
				circle,draw,fill=white,
				inner sep=0pt,minimum size=5pt
			},
			every label/.style={font=\normalsize},
			tree caption/.style={
				align=center,anchor=north,inner sep=0pt
			},
			line width=0.6pt
			]
			\begin{scope}
				\node[vertex,label=right:$1$]
				(a0) at (0.00,-1.50) {};
				\node[vertex,label=right:$1$]
				(a1) at (0.00,-2.25) {};
				\draw (a0)--(a1);
				\node[tree caption] at (0.00,-4.15)
				{$\theta(1)$};
			\end{scope}
			
			\node at (1.30,-1.875) {$\oplus$};
			
			\begin{scope}
				\node[vertex,label=right:$3$]
				(b0) at (3.90,0.00) {};
				\node[vertex,label=right:$3$]
				(b1) at (3.90,-0.75) {};
				\node[vertex,label=right:$6$]
				(b2) at (3.90,-1.50) {};
				\node[vertex,label=right:$1$]
				(b3) at (2.50,-2.25) {};
				\node[vertex,label=right:$1$]
				(b4) at (2.50,-3.00) {};
				\node[vertex,label=right:$1$]
				(b5) at (3.20,-2.25) {};
				\node[vertex,label=right:$1$]
				(b6) at (3.90,-2.25) {};
				\node[vertex,label=right:$3$]
				(b7) at (5.30,-2.25) {};
				\node[vertex,label=right:$1$]
				(b8) at (4.60,-3.00) {};
				\node[vertex,label=right:$1$]
				(b9) at (5.30,-3.00) {};
				\node[vertex,label=right:$1$]
				(b10) at (6.00,-3.00) {};
				\node[vertex,label=right:$1$]
				(b11) at (6.00,-3.75) {};
				\draw
				(b0)--(b1) (b1)--(b2) (b2)--(b3)
				(b3)--(b4) (b2)--(b5) (b2)--(b6)
				(b2)--(b7) (b7)--(b8) (b7)--(b9)
				(b7)--(b10) (b10)--(b11);
				\node[tree caption] at (4.25,-4.15)
				{$\theta(C(21,13827564,1))$};
			\end{scope}
			
			\node at (7.50,-1.875) {$=$};
			
			\begin{scope}
				\node[vertex,draw=red,label={[red]right:$4$}]
				(c0) at (10.05,0.00) {};
				\node[vertex,draw=red,label={[red]right:$1$}]
				(c1) at (9.00,-0.75) {};
				\node[vertex,label=right:$3$]
				(c2) at (11.10,-0.75) {};
				\node[vertex,label=right:$6$]
				(c3) at (11.10,-1.50) {};
				\node[vertex,label=right:$1$]
				(c4) at (9.70,-2.25) {};
				\node[vertex,label=right:$1$]
				(c5) at (9.70,-3.00) {};
				\node[vertex,label=right:$1$]
				(c6) at (10.40,-2.25) {};
				\node[vertex,label=right:$1$]
				(c7) at (11.10,-2.25) {};
				\node[vertex,label=right:$3$]
				(c8) at (12.50,-2.25) {};
				\node[vertex,label=right:$1$]
				(c9) at (11.80,-3.00) {};
				\node[vertex,label=right:$1$]
				(c10) at (12.50,-3.00) {};
				\node[vertex,label=right:$1$]
				(c11) at (13.20,-3.00) {};
				\node[vertex,label=right:$1$]
				(c12) at (13.20,-3.75) {};
				\draw
				 (c0)--(c2) (c2)--(c3)
				(c3)--(c4) (c4)--(c5) (c3)--(c6)
				(c3)--(c7) (c3)--(c8) (c8)--(c9)
				(c8)--(c10) (c8)--(c11) (c11)--(c12);
				\draw[red]
				(c0)--(c1);
				\node[tree caption] at (11.10,-4.15)
				{$F(1\oplus C(21,13827564,1))$};
			\end{scope}
		\end{tikzpicture}
		\caption{An example of the bijection $F$.}
		\label{fig:F-example}
	\end{figure}
	
	Conversely, given a $\beta(1,0)$-tree $B\in \B_n$ with $n\geq 1$,  write its unique decomposition into indecomposable  $\beta(1,0)$-trees   as 
	$$
	B=B_1\oplus  \cdots \oplus B_r.
	$$
	Define
	$$
	F'(B)=\theta^{-1}(B_1)\oplus  \cdots \oplus \theta^{-1}(B_r).
	$$
	\begin{proof}[\normalfont\bfseries Proof of Theorem~\ref{thm:main}]
	By 
		Proposition~\ref{prop:theta}, the maps $F$ and $F'$ are inverses of each other, and hence  $F:\TT_n\longrightarrow\B_n$ is a bijection
		for every $n\geq1$.
		
		Let $\pi\in\TT_n$ have the indecomposable decomposition
		$\pi=\pi^{(1)}\oplus\cdots\oplus\pi^{(r)}$,
		and set $B_j=\theta(\pi^{(j)})$ for $1\leq j\leq r$.
		From the definition of the direct sum, we have
		\begin{equation}\label{eq:F-sum-permutation-statistics}
			\begin{aligned}
				\comp(\pi)&=r,\\
				\asc(\pi)&=r-1+\sum_{j=1}^{r}\asc(\pi^{(j)}),\\
				\ldr(\pi)&=\ldr(\pi^{(1)}),\\
				\rmax(\pi)&=\rmax(\pi^{(r)}).
			\end{aligned}
		\end{equation}
		Since each $B_j$ is indecomposable, we obtain
		\[
		\begin{array}{lll}
			\sub(F(\pi))
			&=r
			&\mbox{(by \eqref{eq:F-sum},
				\eqref{eq:tree-sum-statistics})}\\
			&=\comp(\pi)
			&\mbox{(by \eqref{eq:F-sum-permutation-statistics})},
			\\[5pt]
			\leaves(F(\pi))
			&=\displaystyle\sum_{j=1}^{r}\leaves(B_j)
			&\mbox{(by \eqref{eq:F-sum},
				\eqref{eq:tree-sum-statistics})}\\
			&=r+\displaystyle\sum_{j=1}^{r}\asc(\pi^{(j)})
			&\mbox{(by \eqref{eq:Omega-statistics})}\\
			&=1+\asc(\pi)
			&\mbox{(by \eqref{eq:F-sum-permutation-statistics})},
			\\[5pt]
			\lsub(F(\pi))
			&=\lsub(B_1)
			&\mbox{(by \eqref{eq:F-sum},
				\eqref{eq:tree-sum-statistics})}\\
			&=\ldr(\pi^{(1)})
			&\mbox{(by \eqref{eq:Omega-statistics})}\\
			&=\ldr(\pi)
			&\mbox{(by \eqref{eq:F-sum-permutation-statistics})},
			\\[5pt]
			\rpath(F(\pi))
			&=\rpath(B_r)
			&\mbox{(by \eqref{eq:F-sum},
				\eqref{eq:tree-sum-statistics})}\\
			&=\rmax(\pi^{(r)})
			&\mbox{(by \eqref{eq:Omega-statistics})}\\
			&=\rmax(\pi)
			&\mbox{(by \eqref{eq:F-sum-permutation-statistics})}.
		\end{array}
		\]
		This proves~\eqref{eq:main-statistics}, completing the proof.
	\end{proof}

	 \section*{Acknowledgments}
	 This work was supported by the National  Science Foundation of China  (grants 12471318).

	%%%%%%%%%%%%%%%%%%%%%%%%%%%

\end{document}